\providecommand{\U}[1]{\protect\rule{.1in}{.1in}}
\newcommand{\tph}{2\pi\hslash}
\newcommand{\rn}{\mathbb{R}^n}
\newcommand{\rnn}{\mathbb{R}^{2n}}
\newcommand{\sn}{\mathcal{S}(\mathbb{R}^n)}
\newcommand{\spn}{\mathcal{S}'(\mathbb{R}^n)}
\newcommand{\snn}{\mathcal{S}(\mathbb{R}^{2n})}
\newcommand{\spnn}{\mathcal{S}'(\mathbb{R}^{2n})}
\newtheorem{theorem}{Theorem}
\newtheorem{definition}[theorem]{Definition}
\newtheorem{example}[theorem]{Example}
\newtheorem{proposition}[theorem]{Proposition}
\newtheorem{remark}[theorem]{Remark}
\begin{document}

\title{On the Invertibility of Born-Jordan Quantization}
\author{Elena Cordero,  Maurice de Gosson and Fabio Nicola}
\address{Dipartimento di Matematica,
Universit\`a di Torino, via Carlo Alberto 10, 10123 Torino, Italy}
\address{University of Vienna, Faculty of Mathematics, Oskar-Morgenstern-Platz 1
A-1090 Wien, Austria}
\address{Dipartimento di Scienze Matematiche,
Politecnico di Torino, corso Duca degli Abruzzi 24, 10129 Torino,
Italy}

\email{elena.cordero@unito.it}
\email{maurice.de.gosson@univie.ac.at}
\email{fabio.nicola@polito.it}

\subjclass[2010]{47G30, 81S05, 46F10}
\keywords{Born-Jordan quantization, Weyl quantization, pseudodifferential operators, dequantization problem}

\maketitle

\begin{abstract}
As a consequence of the Schwartz kernel Theorem, any linear continuous operator $\widehat{A}:$ $\mathcal{S}%
(\mathbb{R}^{n})\longrightarrow\mathcal{S}^{\prime}(\mathbb{R}^{n})$ can be written in Weyl form in a unique way, namely it is the Weyl quantization of a unique symbol $a\in  \mathcal{S}^{\prime}(\mathbb{R}^{2n})$. Hence, dequantization can always be performed, and in a unique way. Despite the importance of this topic in Quantum Mechanics and Time-frequency Analysis, the same issue for the Born-Jordan quantization seems simply unexplored, except for the case of polynomial symbols, which we also review in detail. In this paper we show that any operator $\widehat{A}$ as above can be written in Born-Jordan form, although the representation is never unique if one allows general temperate distributions as symbols. Then we consider the same problem when the space of temperate distributions is replaced by the space of smooth slowly increasing functions which extend to entire function in $\mathbb{C}^{2n}$, with a growth at most exponential in the imaginary directions. We prove again the validity of such a representation, and we determine a sharp threshold for the exponential growth under which the representation is unique. We employ techniques from the theory of division of distributions.
\end{abstract}

\section{Introduction}
Roughly speaking, quantization is the process of associating to a function or
distribution defined on phase space an operator. Historically, this notion
appears explicitly for the first time in Born and Jordan's foundational paper
\cite{bj} where they set out to give a firm mathematical basis to Heisenberg's
matrix mechanics. Born and Jordan's quantization scheme was strictly speaking
limited to polynomials in the variables $x$ and $p$; it was soon superseded by
another rule due to Weyl, and whose extension is nowadays the preferred
quantization in physics. However, it turns out that there is a recent regain in
interest in an extension of Born and Jordan's initial rule, both in Quantum Physics and Time-frequency Analysis. In fact, on the one hand it is the correct rule if one wants matrix and wave mechanics to be equivalent quantum theories (see the discussion in \cite{gofound}).
On the other hand, as a time-frenquency representation the Born-Jordan distribution has been proved to be surprisingly successful, because it allows to damp very well the unwanted ``ghost frequencies'', as shown in \cite{bogetal,turunen}. 
\par
The difference between Born--Jordan and Weyl quantization is most easily
apprehended on the level of monomial quantization: in dimension $n=1$ for any integers $r,s\geq0$
we have
\begin{equation}
\operatorname*{Op}\nolimits_{\mathrm{W}}(x^{r}p^{s})=\frac{1}{2^{s}}\sum
_{\ell=0}^{s}\binom{s}{\ell}\widehat{p}^{s-\ell}\widehat{x}^{r}\widehat
{p}^{\ell}=\frac{1}{2^{r}}\sum_{\ell=0}^{r}\binom{r}{\ell}\widehat{x}^{\ell
}\widehat{p}^{s}\widehat{x}^{r-\ell}\label{weylmono1}%
\end{equation}
(see \cite{mccoy}) and
\begin{equation}
\operatorname*{Op}\nolimits_{\mathrm{BJ}}(x^{r}p^{s})=\frac{1}{s+1}\sum
_{\ell=0}^{s}\widehat{p}^{s-\ell}\widehat{x}^{r}\widehat{p}^{\ell}=\frac
{1}{r+1}\sum_{\ell=0}^{r}\widehat{x}^{\ell}\widehat{p}^{s}\widehat{x}^{r-\ell
}\label{bjmono1}%
\end{equation}
(see \cite{bj}). As usual here $\widehat{p}=-i\hslash\partial/\partial x$ and $\widehat{x}$ is the multiplication operator by $x$. The Born--Jordan scheme thus appears as being an
equally-weighted quantization, as opposed to the Weyl scheme:
$\operatorname*{Op}\nolimits_{\mathrm{BJ}}(x^{r}p^{s})$ is the average of all
possible permutations of the product $\widehat{x}^{r}\widehat{p}^{s}$.\par
 One can
extend the Weyl and Born--Jordan quantizations to arbitrary symbols
$a\in\mathcal{S}^{\prime}(\mathbb{R}^{2n})$ by defining the operators $\widehat{A}_{\mathrm{W}}=\operatorname*{Op}\nolimits_{\mathrm{W}}(a)$ and
$\widehat{A}_{\mathrm{BJ}}=\operatorname*{Op}\nolimits_{\mathrm{BJ}}(a)$: $\mathcal{S}(\mathbb{R}^{n})\to \mathcal{S}'(\mathbb{R}^{n})$ as
\begin{align}\label{intro3}
\widehat{A}_{\mathrm{W}}\psi & =\left(  \tfrac{1}{2\pi\hbar}\right)  ^{n}%
\int_{\mathbb{R}^{2n}}a_{\sigma}(z)\widehat{T}(z)\psi dz\\
\widehat{A}_{\mathrm{BJ}}\psi & =\left(  \tfrac{1}{2\pi\hbar}\right)  ^{n}%
\int_{\mathbb{R}^{2n}}a_{\sigma}(z)\Theta(z)\widehat{T}(z)\psi dz\nonumber
\end{align}
where $\psi
\in\mathcal{S}(\mathbb{R}^{n})$ and the integrals are to be understood in the distributional sense; here
$\widehat{T}(z_{0})=e^{-i(x_{0}\widehat{p}-p_{0}\widehat{x})/\hbar}$, $z_0=(x_0,p_0)$, is the
Heisenberg operator, $a_\sigma(z)=a_{\sigma}(x,p)=Fa(p,-x)$, with $z=(x,p)$, is the symplectic Fourier
transform of $a$, and $\Theta$ is Cohen's \cite{Cohen1} kernel function,
defined by
\[
\Theta(z)=\frac{\sin(px/2\hbar)}{px/2\hbar}%
\]
 ($\Theta(z)=1$ for $z=0$); see 
\cite{bogetal, TRANSAM, golu1} and
the references therein. The presence of the function $\Theta$ produces a smoothing effect (in comparison with the Weyl quantization) which is responsible of the superiority of the Born-Jordan quantization in several respects (\cite{bogetal,turunen}). However, although this effect is numerically evident it remains a challenging open problem to quantify it analytically.\par
 Now, it readily follows from the Schwartz kernel
Theorem that for every linear continuous operator $\widehat{A}:$ $\mathcal{S}%
(\mathbb{R}^{n})\longrightarrow\mathcal{S}^{\prime}(\mathbb{R}^{n})$ there
exists a unique $b\in\mathcal{S}^{\prime}(\mathbb{R}^{2n})$ such that
$\widehat{A}=\operatorname*{Op}\nolimits_{\mathrm{W}}(b)$. In other terms, dequantization can always be performed, and in a unique way. Instead the situation is
 more complicated for Born--Jordan operators. In fact, to prove that there
exists $a\in\mathcal{S}^{\prime}(\mathbb{R}^{2n})$ such that $\widehat
{A}=\operatorname*{Op}\nolimits_{\mathrm{BJ}}(a)$ one has to solve a division
problem, namely to find a distribution $a$ such that \[
b_{\sigma}= \Theta a_{\sigma
}.
\] The existence of such a symbol $a$ is far from being obvious because of the
zeroes of $\Theta$. It moreover turns out, as we shall see, that the solution is not even unique. The aim of this paper is to investigate these issues.\par
The problem of the division of temperate distributions by smooth functions is in general a very subtle one, even in the presence of simple zeros \cite{atiyah,bonet,hormander0,lang,loja,schwartz}. The basic idea here is of course that the space $\mathcal{S}^{\prime}(\mathbb{R}^{n})$ contains (generalized) functions rough enough to absorb the singularities and the loss of decay arising in the division by $\Theta$, and we have in fact the following result (Theorem \ref{mainteo1}). \par\medskip
{\it Every linear continuous operator $\widehat{A}:$ $\mathcal{S}%
(\mathbb{R}^{n})\longrightarrow\mathcal{S}^{\prime}(\mathbb{R}^{n})$ can be written in Born-Jordan form, i.e. there
exists $a\in\mathcal{S}^{\prime}(\mathbb{R}^{2n})$ such that
$\widehat{A}=\operatorname*{Op}\nolimits_{\mathrm{BJ}}(a)$.} \par\medskip
We provide two proofs of this result. One is completely elementary and constructive. The other is shorter and is based on the machinary of a priori estimates developed in \cite{hormander0} to prove that the division by a (non identically zero) polynomial is always possible in $\mathcal{S}'(\mathbb{R}^n)$. Actually the reader familiar with \cite{hormander0} will notice that the tools used there are excessively sophisticated for our purposes: after all the function $\Theta(z)$ is the composition of the harmless ``{\rm sinc}" function with the polynomial $px$, and indeed we will show that a suitable change of variables will reduce matters to the problem of division by the ``sinc'' function. 
One can also rephrase this result as follows. \par\medskip
{\it  The map $\mathcal{S}^{\prime}(\mathbb{R}^{2n})\to \mathcal{S}^{\prime}(\mathbb{R}^{2n})$
\begin{equation}\label{map00}
a\longmapsto a\ast\Theta_\sigma,
\end{equation}
which gives the Weyl symbol of an operator with Born-Jordan symbol $a$, is surjective.}\par\medskip
It is important to observe that the above representation of the operator $\widehat{A}$ is never unique: if $\widehat{A}=\operatorname*{Op}\nolimits_{\mathrm{BJ}}(a)$ and $z_0\in\mathbb{R}^{2n}$ verifies $\Theta(z_0)=0$ then the symbol $a(z)+e^{\frac{i}{\hslash}\sigma(z_0,z)}$ gives rise to the {\it same} operator $\widehat{A}$; see Example \ref{rem2} below. \par
As one may suspect, imaginary-exponential symbols play an important role in the discussion. In fact, the function $e^{\frac{i}{\hslash}\sigma(z_0,z)}$ turns out to be the Weyl symbol of the operator $\widehat{T}(z_0)$ and, in general, any operator can be regarded as a superposition of $\widehat{T}(z)$'s; cf.\ \eqref{intro3}. This suggests the study of the map \eqref{map00} in spaces of smooth temperate functions which extend to entire functions in $\mathbb{C}^{2n}$, with a  growth at most exponential in the imaginary directions. To be precise, for $r\geq0$, let $\mathcal{A}_r$ be the space of smooth functions $a$ in $\mathbb{R}^{2n}$ that extend to entire functions $a(\zeta)$ in $\mathbb{C}^{2n}$ and satisfying the estimate
\[
|a(\zeta)|\leq C(1+|\zeta|)^N\exp\Big(\frac{r}{\hslash}|{\rm Im}\,\zeta|\Big),\quad \zeta\in \mathbb{C}^{2n},
\]
for some $C,N>0$. For example the symbol $e^{\frac{i}{\hslash}\sigma(z_0,z)}$ belongs to $\mathcal{A}_r$ with $r=|z_0|$, whereas $\mathcal{A}_0$ is nothing but the space of polynomials in phase space. Then we have the following result (Proposition \ref{pro6} and Theorem \ref{mainteo2}). \par\medskip
{\it The map $\mathcal{A}_r\to \mathcal{A}_r$ in \eqref{map00} is surjective for every $r\geq0$. It is also one to one (and therefore a bijection) if and only if $0\leq r<\sqrt{4\pi\hslash}$.}\par\medskip
A detailed study for polynomial symbols (case $r=0$) will be carried out in Section \ref{polynomial}, where an explicit formula for the inverse map is provided. The above threshold $r=\sqrt{4\pi\hslash}$ seems to have an interesting physical interpretation in terms of the Heisenberg uncertainty principle/symplectic capacity, and will be explored elsewhere. \par\medskip
The present paper represents a first step of a project in the undestanding of the Born-Jordan quantization within the general framework of the temperate distributions. In  fact, in view of the role of the Born-Jordan quantization in Time-frequency Analysis it would be certainly interesting to study the invertibility issue in smaller spaces of functions and distributions arising in Fourier Analysis, such as (weighted) Sobolev spaces, modulation spaces, Wiener amalgam spaces, cf.\ \cite{Birkbis,grochenig1}. Also, due to the above mentioned smoothing effect one expects Born-Jordan operators enjoy better continuity properties than those known for Weyl operators (cf.\ \cite{boulk,cordero1,cordero2,cordero3,cunanan,grochenig1,grochenig2,kobayashi,stein,sugimoto1,sugimoto2,sugimoto3,toft1,toft2} and the references therein). Some results in this direction were already obtained in \cite{bogetal,turunen}, and we plan to continue this study in a subsequent work \cite{cgnp}. \par\medskip
The paper is organized as follows. In Section 2 we collected some preliminary results on the division of distributions. Weyl and Born-Jordan quantizations are then introduced in Section 3, whereas Section 4 is devoted to a detailed analysis of the case of polynomial symbols. In Section 5 we address the problem of the invertibility of the map \eqref{map00} in the space $\mathcal{S}'(\mathbb{R}^{2n})$ and in the spaces $\mathcal{A}_r$ defined above.

\section{Notation and preliminary results}
\subsection{Notation}
We will use multi-index notation: $\alpha=(\alpha_{1},...,\alpha_{n}%
)\in\mathbb{N}^{n}$, $|\alpha|=\alpha_{1}+\cdot\cdot\cdot+\alpha_{n}$, $x^\alpha=x_1^{\alpha_1}\cdots x_n^{\alpha_n}$, 
$\partial_{x}^{\alpha}=\partial_{x_{1}}^{\alpha_{1}}\cdots
\partial_{x_{n}}^{\alpha_{n}}$. \par
As usual $\sn$ denotes the Schwartz space of smooth functions $\psi$ in $\rn$ such that
\begin{equation}\label{seminorm}
\|\psi\|_N:=\sup_{|\alpha|+|\beta|\leq N}\sup_{x\in\rn}|x^\alpha\partial_x^\beta \psi(x)|<\infty 
\end{equation}
for every $N\geq0$. This is a Fr\'echet space, endowed with the above seminorms. We denote by $\spn$ the dual space of temperate distributions. 

 We denote by $\sigma$ the standard symplectic
form on the phase space $\mathbb{R}^{2n}\equiv\mathbb{R}^{n}\times
\mathbb{R}^{n}$; the phase space variable is denoted by $z=(x,p)$. By definition
$\sigma(z,z^{\prime})=Jz\cdot z^{\prime}=p\cdot x'-x\cdot p'$ (with $z'=(x',p')$), where 
\[J=%
\begin{pmatrix}
0_{n\times n} & I_{n\times n}\\
-I_{n\times n} & 0_{n\times n}%
\end{pmatrix}.
\]

We will use the notation $\widehat{x}_{j}$ for the operator of multiplication
by $x_{j}$ and $\widehat{p}_{j}=-i\hslash\partial/\partial x_{j}$. These
operators satisfy Born's canonical commutation relations $[\widehat{x}%
_{j},\widehat{p}_{j}]=i\hslash$.

The Fourier transform of a function $\psi(x)$ in $\rn$ is defined as 
\[
F\psi(p)= \left(\tfrac{1}{\tph}\right)^{n/2}\int_{\rn} e^{-\tfrac{i}{\hslash}p x} \psi(x)\, dx,
\]
where $px=p\cdot x=\sum_{j=1}^n p_j x_j$, and the symplectic Fourier transform of a function $a(z)$ in phase space $\rnn$ is
\[
F_\sigma a(z)=a_\sigma(z)= \left(\tfrac{1}{\tph}\right)^n\int_{\rnn} e^{-\tfrac{i}{\hslash}\sigma(z,z')} a(z')\, dz'.
\]
We observe that the symplectic Fourier transform is an involution, i.e. $(a_\sigma)_\sigma=a$, and moreover $a_\sigma(z)= Fa(J z)$. We will also use frequently the important relation
\begin{equation}\label{eq1}
(a\ast b)_\sigma=(\tph)^na_\sigma b_\sigma.
\end{equation}
\subsection{Compactly supported distributions}
We recall the Paley-Wiener-Schwartz Theorem (see e.g.\ \cite[Theorem 7.3.1]{hormander}). 
\begin{theorem}
For $r\geq0$, let $B_r$ be the closed ball $|x|\leq r$ in $\rn$. If $u$ is a distribution with compact support in $B_{r}$ then its (symplectic) Fourier transform extends to an entire analytic function in $\mathbb{C}^n$ and satisfies
\[
|Fu(\zeta)|\leq C(1+|\zeta|)^N\exp\Big(\frac{r}{\hslash}|{\rm Im}\,\zeta|\Big)
\]
for some $C,N>0$.\par
Conversely, every entire analytic function satisfying an estimate of this type is the (symplectic) Fourier transform of a distribution supported in $B_{r}$. 
\end{theorem}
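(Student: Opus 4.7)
The plan is to prove the two directions separately via standard techniques, the forward one using the finite order of a compactly supported distribution and the converse via a Cauchy contour shift. For the direct statement, I would define
\[
Fu(\zeta)=\left(\tfrac{1}{2\pi\hslash}\right)^{n/2}\langle u(x),e^{-i\zeta x/\hslash}\rangle,\quad \zeta\in\mathbb{C}^n,
\]
which makes sense since $u$ is compactly supported and $x\mapsto e^{-i\zeta x/\hslash}$ is smooth. Joint holomorphicity follows from the convergence of the difference quotients $h^{-1}(e^{-i(\zeta+he_j)x/\hslash}-e^{-i\zeta x/\hslash})$ to $-i\hslash^{-1}x_j e^{-i\zeta x/\hslash}$ in the $C^\infty$ topology on a compact neighborhood of $B_r$, combined with Hartogs' theorem; the restriction to $\mathbb{R}^n$ coincides with the usual Fourier transform.

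To obtain the sharp exponential bound, I exploit that $u$ has finite order $N$ and pick, for every $\epsilon\in(0,1]$, a cutoff $\chi_\epsilon\in C_c^\infty(\mathbb{R}^n)$ with $\chi_\epsilon=1$ near $B_r$, $\mathrm{supp}\,\chi_\epsilon\subset B_{r+\epsilon}$, and $|\partial^\alpha\chi_\epsilon|\leq C_\alpha\epsilon^{-|\alpha|}$. Then $\langle u,e^{-i\zeta x/\hslash}\rangle=\langle u,\chi_\epsilon e^{-i\zeta x/\hslash}\rangle$, and applying the continuity estimate for $u$ together with Leibniz yields
\[
|Fu(\zeta)|\leq C\epsilon^{-N}(1+|\zeta|)^N e^{(r+\epsilon)|\mathrm{Im}\,\zeta|/\hslash}.
\]
Choosing $\epsilon=\min\{1,\hslash/(1+|\mathrm{Im}\,\zeta|)\}$ bounds $e^{\epsilon|\mathrm{Im}\,\zeta|/\hslash}$ by a constant and converts $\epsilon^{-N}$ into an additional polynomial factor in $|\zeta|$, yielding the stated estimate.

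For the converse, given an entire function $g$ satisfying the estimate, its restriction to $\mathbb{R}^n$ is a tempered distribution, so I set $u:=F^{-1}(g|_{\mathbb{R}^n})\in\mathcal{S}'(\mathbb{R}^n)$; then $Fu=g$ everywhere on $\mathbb{C}^n$ by analytic continuation. To show $\mathrm{supp}\,u\subset B_r$, take $\varphi\in C_c^\infty(\mathbb{R}^n)$ with $\mathrm{supp}\,\varphi\cap\overline{B_r}=\emptyset$ and, via a partition of unity, reduce to the case $\mathrm{supp}\,\varphi\subset\{x\in\mathbb{R}^n : \omega\cdot x>r+\delta\}$ for a unit vector $\omega$ and some $\delta>0$. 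Then $\langle u,\varphi\rangle=\int_{\mathbb{R}^n} g(\xi)(F^{-1}\varphi)(\xi)\,d\xi$; both factors extend to entire functions, so Cauchy's theorem allows replacing the contour by $\xi\mapsto\xi+it\omega$ for any $t>0$. Integration by parts in the definition of $F^{-1}\varphi$ gives $|(F^{-1}\varphi)(\xi+it\omega)|\leq C_M(1+|\xi|)^{-M}(1+t)^M e^{-t(r+\delta)/\hslash}$ for every $M$, which combined with $|g(\xi+it\omega)|\leq C(1+|\xi|+t)^N e^{rt/\hslash}$ yields integrable-in-$\xi$ decay of order $e^{-t\delta/\hslash}$; sending $t\to+\infty$ forces $\langle u,\varphi\rangle=0$.

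The only substantive difficulty in both halves is the sharpness of the exponential constant: recovering $r$ rather than $r+\epsilon$ requires, in the direct direction, the $\zeta$-dependent choice of cutoff width, and in the converse, the freedom to send the imaginary shift $t$ to infinity. Justifying the contour deformation (vanishing of lateral contributions as $|\xi|\to\infty$ for $t$ in bounded sets) is routine given the polynomial bound on $g$ and the Schwartz-class decay of $F^{-1}\varphi$.
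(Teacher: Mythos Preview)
Your argument is correct and is essentially the standard proof of the Paley--Wiener--Schwartz theorem (the $\epsilon$-dependent cutoff in the forward direction and the contour shift in the converse are precisely H\"ormander's devices). Note, however, that the paper does not prove this statement at all: it is merely recalled as a known result with a reference to \cite[Theorem~7.3.1]{hormander}, so there is no ``paper's own proof'' to compare against---your sketch simply supplies what the paper takes for granted.
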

\subsection{Division of distributions}\label{division} We  begin with a technical result, inspired by \cite[Theorem VII, page 123]{schwartz}, which will be used in the sequel.\par
Recall the definition of the seminorm $\|\varphi\|_N$  in \eqref{seminorm}. 
\begin{proposition}\label{pro2} Let $v\in\spn$ and $\chi\in C^\infty_c(\mathbb{R})$. For every $t\in \mathbb{R}$ there exists a distribution $u_t\in\spn$ satisfying 
\begin{equation}\label{eq0}
(x_n-t)u_t=\chi(x_n-t)v.
\end{equation}
Moreover $u_t$ can be chosen so that
\begin{equation}\label{eq1}
|u_t(\varphi)|\leq C\|\varphi\|_N\quad \forall\varphi\in\sn
\end{equation}
for some constants $C,N>0$ independent of $t$.
\end{proposition}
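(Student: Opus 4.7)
The plan is to define $u_t$ by an explicit formula that divides by $x_n-t$ after a Taylor subtraction at $x_n=t$, and then to derive the uniform estimate in $t$ from the rapid decay of Schwartz test functions. I would fix an auxiliary cutoff $\chi_0\in C_c^\infty(\mathbb{R})$ with $\chi_0(0)=1$ and define, for $\varphi\in\sn$ and $x=(x',x_n)$,
\[
\langle u_t,\varphi\rangle := \Big\langle v,\ \chi(x_n-t)\,\frac{\varphi(x',x_n)-\chi_0(x_n-t)\varphi(x',t)}{x_n-t}\Big\rangle.
\]

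Since $\chi_0(0)=1$ the numerator vanishes at $x_n=t$, and Taylor's formula gives
\[
\frac{\varphi(x',x_n)-\chi_0(x_n-t)\varphi(x',t)}{x_n-t}=\int_0^1\partial_{x_n}\varphi(x',t+s(x_n-t))\,ds+\frac{1-\chi_0(x_n-t)}{x_n-t}\varphi(x',t),
\]
where both summands are smooth in $x$. Multiplication by $\chi(x_n-t)$ then confines the $x_n$-support to a strip $|x_n-t|\leq R$ and yields a Schwartz function $\psi_t\in\sn$, so $u_t$ is a well-defined linear functional on $\sn$. The equation $(x_n-t)u_t=\chi(x_n-t)v$ follows by direct substitution: replacing $\varphi$ by $(x_n-t)\varphi$ makes the $\chi_0$ correction vanish identically (since $[(x_n-t)\varphi](x',t)=0$), leaving $\langle u_t,(x_n-t)\varphi\rangle=\langle v,\chi(x_n-t)\varphi\rangle$, which is \eqref{eq0}.

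The main obstacle is the uniform bound for $|u_t(\varphi)|$. Using $|v(\psi)|\leq C_0\|\psi\|_{N_0}$ for some $N_0$ attached to $v$, the task reduces to showing $\|\psi_t\|_{N_0}\leq C\|\varphi\|_N$ uniformly in $t$, for some $N$ depending on $N_0$ and the support radius of $\chi$. The delicacy is that on the support of $\chi(x_n-t)$ the factor $|x_n|^{\alpha_n}$ appearing in the seminorm can grow like $(|t|+R)^{\alpha_n}$, and this growth must be absorbed. I would overcome this by exploiting the rapid decay of $\varphi$: for $|t|\geq 2R$ and $|x_n-t|\leq R$, any argument $t+s(x_n-t)$ with $s\in[0,1]$ satisfies $|t+s(x_n-t)|\geq |t|/2$, so for every $M$
\[
|\partial^\gamma\varphi(x',t+s(x_n-t))|\leq C_M\|\varphi\|_{|\gamma|+M}(1+|x'|)^{-M/2}(1+|t|)^{-M/2},
\]
and choosing $M$ sufficiently large compared with $N_0$ absorbs both the polynomial factor $|x'|^{\alpha'}$ and the factor $(|t|+R)^{\alpha_n}$. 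The second summand of $\psi_t$ is controlled in the same spirit using the decay of $\varphi(x',t)$ in $t$ at fixed $x'$; the range $|t|\leq 2R$ is trivial since everything lies in a compact set. Combining these estimates yields the required uniform bound.
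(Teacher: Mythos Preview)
Your construction is correct and follows essentially the same route as the paper: define $u_t$ via the Taylor remainder $\tilde\varphi_t(x)=\int_0^1\partial_{x_n}\varphi(x',t+s(x_n-t))\,ds$ and pair with $v$ after multiplying by $\chi(x_n-t)$. Two minor remarks: the auxiliary cutoff $\chi_0$ is unnecessary (the paper simply sets $u_t(\varphi)=v(\chi(x_n-t)\tilde\varphi_t)$, which already works), and your uniform estimate, while correct, is less sharp than the paper's---the paper observes directly that on $\operatorname{supp}\chi(\cdot-t)$ one has $1+|x_n|\leq C(1+|t|)\leq C'(1+|t+\tau(x_n-t)|)$ for all $\tau\in[0,1]$, which immediately gives $\|\chi(x_n-t)\tilde\varphi_t\|_N\leq C''\|\varphi\|_{N+1}$ with a loss of only one order, avoiding the case split and the appeal to rapid decay.
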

\begin{proof}
We define $u_t$ as follows. Write $x=(x',x_n)$, and let $\varphi\in\sn$. We have 
\[
\varphi(x)=\varphi(x',t)+(x_n-t)\tilde{\varphi}_t(x)
\]
with 
\[
\tilde{\varphi}_t(x)=\int_0^1 \partial_{x_n}\varphi(x',t+\tau(x_n-t))\, d\tau.
\]
Observe that $\chi(x_n-t)\tilde{\varphi}_t \in\sn$ (whereas $\tilde{\varphi}_t$ is not Schwartz in the $x_n$ direction, in general) and define
\[
u_t(\varphi)=v(\chi(x_n-t)\tilde{\varphi}_t).
\]
It is easy to see that $u_t$ is in fact a temperate distribution: since $v\in \spn$ we have 
\[
|u_t(\varphi)|\leq C\|\chi(x_n-t)\tilde{\varphi}_t\|_{N}
\]
for some $C,N>0$ independent of $t$. On the other hand on the support of $\chi(x_n-t)$ we have $|x_n-t|\leq C_1$ and 
\[
1+|x_n|\leq C_2(1+|t|)\leq C_3( 1+|t+\tau(x_n-t)|),
\]
for every $\tau\in[0,1]$, so that 
\begin{align*}
|x_n^{\alpha_n}\partial_x^\beta \tilde{\varphi}_t(x)|
&\leq \int_0^1 |x_n^{\alpha_n}| |\partial_x^\beta[\partial_{x_n}\varphi(x',t+\tau(x_n-t))]|\, d\tau
\\
&\leq C'\int_0^1 (1+|t+\tau(x_n-t)|)^{\alpha_n}|\partial_x^\beta[\partial_{x_n}\varphi(x',t+\tau(x_n-t))]|\, d\tau
\end{align*}
which gives 
\[
\|\chi(x_n-t)\tilde{\varphi}_t\|_{N}\leq C'' \|\varphi\|_{N+1}
\]
with constants $C'', N$ independent of $t$. This gives \eqref{eq1}.\par
The formula \eqref{eq0} is easily verified: for $\varphi\in\sn$,
\[
(x_n-t)u_t(\varphi)=u_t((x_n-t)\varphi)=v(\chi(x_n-t)\varphi)=\chi(x_n-t)v(\varphi).
\]
\end{proof}
We emphasize that the point in the above result is the control of the constants $C$ and $N$ with respect to $t$; in fact the existence of a temperate distribution solution $u_t$ for every fixed $t$ already follows from a variant of the arguments in \cite[page 127]{schwartz}.\par
We also need the following division result {\it with control of the support}. This result could probably be proved by extracting and combining several arguments disseminated in \cite{loja}, but we prefer to provide a self-contained and more accessible proof. 
As above we split the variable in $\rn$ as $x=(x',x_n)$. 
\begin{proposition}\label{prolo}
Let $B'=\{x'\in\mathbb{R}^{n-1}:\, |x'|<1\}$, $B=B'\times\mathbb{R}\subset\rn$  and $f:B'\to\mathbb{R}$ be a smooth function. Let $K:=\{x=(x',x_n)\in B:\ x_n\geq f(x')\}$. Suppose that 
\[
K_0:=\{x'\in B':\ f(x')=0\} =\{x'\in B':\ x_1=\ldots=x_k=0\}
\] for some $1\leq k\leq n-1$, and 
\begin{equation}\label{lo}
|f(x')|\geq C_0\,{\rm dist}(x',K_0)^N\quad x'\in B
\end{equation}
for some $C_0,N>0$. \par
Then, for every $v\in \mathcal{E}'(B)$ with ${\rm supp}\, v\subset K$ the equation
\begin{equation}\label{e1}
x_n u=v
\end{equation}
admits a solution $u\in\mathcal{E}'(B)$  with ${\rm supp}\, u\subset K$.
\end{proposition}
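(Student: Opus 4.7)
The plan is to straighten the wedge $K$ into a half-space via a global change of coordinates and then reduce the problem to a family of regularized division problems to which uniform seminorm estimates in the spirit of Proposition \ref{pro2} can be applied. Specifically, introduce the diffeomorphism $T:B\to B$ defined by $T(x',x_n)=(x',x_n-f(x'))$, and write $(y',y_n)=T(x',x_n)$. Under $T$ the set $K$ is carried to the half-space $\mathbb{H}:=B'\times[0,\infty)$, and the equation $x_n u=v$ is transformed into
\[
(y_n+f(y'))\,\tilde{u}=\tilde{v},
\]
with $\tilde{u}=T_{\ast}u$ and $\tilde{v}=T_{\ast}v\in\mathcal{E}'(B)$ supported in $\mathbb{H}$. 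It therefore suffices to produce $\tilde{u}\in\mathcal{E}'(B)$ supported in $\mathbb{H}$ solving this transformed equation; the sought $u$ is then $(T^{-1})_{\ast}\tilde{u}$.

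Using a smooth partition of unity subordinated to the connected components of $B'\setminus K_0$ (on each of which $f$ has constant sign), I reduce to the case $f\geq 0$; the opposite sign case is analogous. Under $f\geq 0$ the zero locus $\{y_n=-f(y')\}$ of the multiplier sits inside $\{y_n\leq 0\}$ and meets $\mathbb{H}$ only along the pinch set $K_0\times\{0\}$, whose codimension in $B$ is $k+1\geq 2$.

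For each $\epsilon>0$, let $\tilde{v}_\epsilon$ be the upward translate defined by $\langle\tilde{v}_\epsilon,\varphi\rangle:=\langle\tilde{v},\varphi(y',y_n-\epsilon)\rangle$. Then $\operatorname{supp}\tilde{v}_\epsilon\subset\{y_n\geq\epsilon\}$, where $y_n+f(y')\geq\epsilon>0$, so the product $\tilde{u}_\epsilon:=\tilde{v}_\epsilon/(y_n+f(y'))$ is an unambiguously defined element of $\mathcal{E}'(B)$, supported in $\{y_n\geq\epsilon\}\subset\mathbb{H}$ and satisfying $(y_n+f)\tilde{u}_\epsilon=\tilde{v}_\epsilon$. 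The crux is to show that the family $\{\tilde{u}_\epsilon\}_{\epsilon\in(0,\epsilon_0]}$ is bounded in $\mathcal{E}'(B)$, i.e.\ that it satisfies $\epsilon$-uniform seminorm bounds $|\tilde{u}_\epsilon(\varphi)|\leq C\|\varphi\|_N$ for constants $C,N$ independent of $\epsilon$. Granted this, Banach-Alaoglu furnishes a weak-$\ast$ subsequential limit $\tilde{u}\in\mathcal{E}'(B)$, which is automatically supported in $\mathbb{H}$; translation continuity in $\mathcal{E}'(B)$ gives $\tilde{v}_\epsilon\to\tilde{v}$, and passing to the limit in $(y_n+f)\tilde{u}_\epsilon=\tilde{v}_\epsilon$ yields $(y_n+f)\tilde{u}=\tilde{v}$, as required.

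The main difficulty lies in establishing the $\epsilon$-uniform bound, and it is precisely here that the \L{}ojasiewicz inequality \eqref{lo} plays a decisive role. The reciprocal $1/(y_n+f(y'))$ may degenerate near the pinch set $K_0\times\{0\}$, but $|f(y')|\geq C_0\operatorname{dist}(y',K_0)^N$ constrains this degeneracy to a fixed polynomial rate, which in turn keeps the order of $\tilde{u}_\epsilon$ bounded uniformly in $\epsilon$. The bound is obtained by a construction paralleling that of Proposition \ref{pro2}, the constant shift $t$ there being replaced by the function $-f(y')$; the exponent $N$ is exactly what makes it possible to extract the limit in $\mathcal{E}'(B)$ rather than in a strictly larger space of distributions. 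Without the \L{}ojasiewicz control, the quotient could degenerate arbitrarily fast along $K_0\times\{0\}$ and the limit procedure would fail.
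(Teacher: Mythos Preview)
There is a genuine gap at the very step you flag as ``the crux'': the family $\{\tilde u_\epsilon\}$ obtained by naive division is \emph{not} uniformly bounded in $\mathcal E'(B)$, and no appeal to Proposition~\ref{pro2} or to the \L ojasiewicz inequality repairs this. Take $n=2$, $k=1$, $f(y_1)=y_1^2$ (so $K_0=\{0\}$ and \eqref{lo} holds with $C_0=1$, $N=2$; note $f\geq 0$, so your sign reduction is vacuous here). Let $\tilde v=\delta_0$, which is supported in $\mathbb H$. Then $\tilde v_\epsilon=\delta_{(0,\epsilon)}$, and since $y_2+y_1^2$ equals $\epsilon$ at $(0,\epsilon)$, your unambiguous quotient is
\[
\tilde u_\epsilon=\frac{1}{\,y_2+y_1^2\,}\,\delta_{(0,\epsilon)}=\epsilon^{-1}\delta_{(0,\epsilon)}.
\]
For any test function $\varphi$ with $\varphi(0,0)\neq 0$ one has $\tilde u_\epsilon(\varphi)=\epsilon^{-1}\varphi(0,\epsilon)\to\infty$, so the Banach--Alaoglu argument cannot be applied. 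The \L ojasiewicz bound constrains how the zero set $\{y_n=-f(y')\}$ approaches $\mathbb H$, but it says nothing about the size of $y_n+f(y')$ \emph{inside} $\mathbb H$ along the pinch set $K_0\times\{0\}$; there the multiplier simply equals $y_n$, and vertical translates divided by $y_n$ blow up. (A secondary issue: a partition of unity subordinate to the components of $B'\setminus K_0$ does not cover $K_0$, which is precisely where the difficulty concentrates.)

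The paper avoids the limit procedure altogether. It writes the general solution of $x_n u=v$ as $u(\varphi)=w\otimes\delta(x_n)+v(\tilde\varphi)$ with $\tilde\varphi(x)=\int_0^1\partial_{x_n}\varphi(x',\tau x_n)\,d\tau$, and then \emph{constructs} the correction $w$ on test functions supported in $\Omega'=\{x':f(x')>0\}$ so as to kill $u$ on $C^\infty_c(\Omega)$. The \L ojasiewicz inequality enters not to bound a quotient of distributions, but to show that this $w$, initially defined only on $C^\infty_c(\Omega')$, satisfies seminorm estimates allowing a Hahn--Banach extension to $\mathcal D'(B')$. If you want to rescue your approach you would have to add, for each $\epsilon$, a correction supported on $K_0\times\{0\}$---but choosing that correction is exactly the content of the paper's construction, so the translate-and-limit layer adds nothing.
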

The condition \eqref{lo} is known as \L ojasiewicz' inequality and is automatically satisfied if $f$ is real-analytic (\cite[Theorem 17]{loja}).
\begin{proof}
As a preliminary remark, it is clear that we can limit ourselves to construct a solution $u\in\mathcal{D}'(B)$ with ${\rm supp}\, u\subset K$, because one can then multiple $u$ by a cut-off function, equal to $1$ in a neighborhhod of the support of $v$ and get another solution in $\mathcal{E}'(B)$, still satisfying ${\rm supp}\, u\subset K$. \par
Now, it is easy to see that a solution of \eqref{e1} is given by the distribution 
\[
C^\infty_c(B)\ni\varphi\mapsto v(\tilde{\varphi})
\]
where \begin{equation}\label{recall}
\tilde{\varphi}(x)=\int_0^1\partial_{x_n}\varphi(x', \tau x_n)\, d\tau.
\end{equation}
 More generally any distribution  of the form 
\begin{equation}\label{e2}
u(\varphi)= w\otimes \delta +v(\tilde{\varphi}),
\end{equation}
where $w$ is an arbitrary distribution in $B'$ and $\delta=\delta(x_n)$, is solution of \eqref{e1}. Hence we are reduced to prove that $w$ can be chosen so that ${\rm supp}\, u\subset K$, i.e. $u(\varphi)=0$ for every $\varphi$ supported in the {\it open} set $\Omega:=\{x\in B:\ x_n<f(x)\}$.  This condition, as we will see, forces the values of $w$ on the test functions $\varphi_1\in C^\infty_c(\Omega)$.
\par
We construct $w$ as follows.
 Fix once for all a function $\varphi_2(x_n)$ in $C^\infty_c(\mathbb{R})$, with $\varphi_2(0)=1$, supported in the interval $[-1,1]$. Let $\Omega'=\Omega\cap \{ x_n=0\}$. Let $\varphi_1(x')$ be any function in $C^\infty_c(\Omega')$ and $\epsilon>0$ such that
\[
{\rm dist}({\rm supp}\, \varphi_1, K_0)>\epsilon.
\]
Then we define
\[
w(\varphi_1):=-v(\tilde\varphi)
\]
where 
\begin{equation}\label{wd}
\varphi(x)=\varphi_1(x')\varphi_2(x_n/(C_0\epsilon^N)),
\end{equation}
 with the constants $C_0,N$ appearing in \eqref{lo}, and $\tilde{\varphi}$ is constructed from $\varphi$ as in \eqref{recall}:
 \begin{align}\label{recall2}
 \tilde{\varphi}(x)&=\frac{\phi_1(x')}{C_0\epsilon^N} \int_0^1 \varphi_2'(\tau x_n/(C_0\epsilon^N))\, d\tau\\
 &=\frac{\phi_1(x')}{x_n}\int_0^{x_n/(C_0\epsilon^N)} \varphi_2'(\tau)\, d\tau;\nonumber
 \end{align}
see Figure 1.
\begin{figure}[b]
\begin{tikzpicture}
\draw[->] (-1.5,0) -- (3,0) node[below]{$x'$};
\draw[thick] (0,0) -- (3,0);
\node at (3,0.7) {$\Omega'$};
\draw[->] (2.8,0.5) --(2.5,0.1);
\draw[->] (0,-0.5) -- (0,2.3) node[left]{$x_n$};
\draw[thick, domain=-0.8:1.8] plot (\x, {0.5*\x*(\x+1)}) node[below right] {$x_n=f(x')$};
\draw[dashed] (1,-0.3) -- (2,-0.3)-- (2,0.3) -- (1,0.3)-- (1,-0.3);
\node at (2,1.3) {$\Omega$};
\node at (0.8,2) {$K$};
\draw[<->, dashed] (0,-0.2)-- (0.9,-0.2);
\node at (0.5,-0.35) {$\epsilon$};
\end{tikzpicture}
\caption{The box contains the support of  $\varphi$ in \eqref{wd}.}. 
\end{figure}
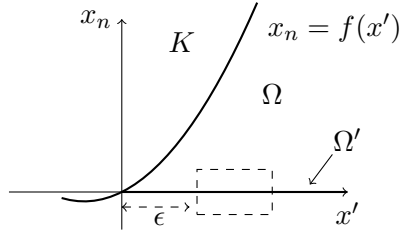

  Observe that the function $\varphi$ in \eqref{wd} is supported in ${\Omega}$ by \eqref{lo}, but this is not the case for $\tilde{\varphi}$.
 We now prove the following facts.\par\medskip
{\it 1) $w$ is well defined.} Let us verify that the definition of $w(\varphi_1)$ does not depend on the choice of $\epsilon$. Let ${\rm dist}({\rm supp}\, \varphi_1, K_0)>\epsilon>\epsilon'>0$; then the difference function
\[
\varphi(x)=\phi_1(x')[\varphi_2(x_n/(C_0\epsilon^N))- \varphi_2(x_n/(C_0{\epsilon'}^N))]
\]
is obviously still supported in $\Omega$ but, in addition, it vanishes at $x_n=0$, so that the corresponding function $\tilde{\varphi}$ in \eqref{recall} has compact support contained in $\Omega$. Since $v$ is supported in $K$ we have $v(\tilde\varphi)=0$. \par\medskip
{\it 2) $w\in \mathcal{D}'(\Omega')$}. This is easy to verify and is also a consequence of the next point. \par\medskip
{\it 3) $w$ extends to a distribution in $\mathcal{D}'(B')$}. It is sufficient to prove an estimate of the type 
\[
|w(\varphi_1)|\leq C\sup_{|\alpha|\leq M}\sup_{x'\in B'} |\partial_{x'}^\alpha \varphi_1(x')|
\]
for some constants $C,M>0$ and every  $\varphi_1\in C^\infty_c(\Omega')$. In fact by the Hahn-Banach theorem one can then extend the linear functional $w:C^\infty_c(\Omega')\to\mathbb{C}$, which is continuous when $C^\infty_c(\Omega')$ is endowed with the norm in the above right-hand side, to a functional on $C^\infty_c(B')$, continuous with respect to the same norm and therefore, a fortiori, for the usual topology of this space.\par
 Now, by the definition of $w$ and since $v$ has compact support in $K$ we have (cf.\ \cite[Theorem 2.3.10]{hormander})
\begin{equation}\label{e18}
|w(\varphi_1)|=|v(\tilde{\varphi})|\leq C\sup_{|\beta|\leq M}\sup_{x\in K\atop x_n\leq C_1} |\partial_x^\beta \tilde{\varphi}(x)|
\end{equation}
for some $C,C_1,M>0$. To estimate the last term we observe that in the expression for $\tilde{\varphi}$ in \eqref{recall2} the integral is in fact constant if $x_n\geq C_0\epsilon^N$. In particular this happens if $x\in K$ and $x'\in{\rm supp}\,\varphi_1$, because \eqref{lo} implies that for such $x$ it turns out \[
x_n\geq f(x)\geq C_0{\rm dist}(x',K_0)^N\geq C_0\epsilon^N.
\] Hence for $x\in K$, $x_n\leq C_1$ we have  
\begin{align*}
\sup_{|\beta|\leq M} |\partial_x^\beta \tilde{\varphi}(x)|&\leq C\sup_{|\alpha|\leq M}|\partial_{x'}^\alpha {\varphi}_1(x')|\cdot x_n^{-M}\\
&\leq C\sup_{|\alpha|\leq M}|\partial_{x'}^\alpha {\varphi}_1(x')|\cdot C_0^{-M} {\rm dist}(x',K_0)^{-N M}.
\end{align*}
On the other hand we have \[
{\rm dist}(x',K_0)=(|x_{1}|^2+\ldots+|x_{k}|^2)^{1/2},
\] so that a Taylor expansion (with remainder of order $NM$) of $\partial_{x'}^\alpha\varphi_1$ with respect to $x_{1},\ldots, x_{k}$ (taking into account that $\varphi_1$ vanishes in a neighborhood of $K_0$) gives 
\[
\sup_{|\beta|\leq M}\sup_{x\in K\atop x_n\leq C_1} |\partial_x^\beta \tilde{\varphi}(x)|\leq  C'\sup_{|\alpha|\leq M+NM}\sup_{x'\in B'}|\partial_{x'}^\alpha {\varphi}_1(x')|.
\]
This together with \eqref{e18} gives the desired conclusion.\par\medskip
{\it 4) With this choice of $w$ in \eqref{e2}, we have ${\rm supp}\, u\subset K$}. Let $\varphi\in C^\infty_c(\Omega)$, and write
\[
\varphi(x)= [\varphi(x)-\varphi(x',0)\varphi_2(x_n/(C_0\epsilon^N))]+\varphi(x',0)\varphi_2(x_n/(C_0\epsilon^N)).
\]
where we choose $\epsilon<{\rm dist}\, ({\rm supp}\, \varphi(\cdot,0),K_0)$.\par Now the distribution $u$ vanishes when applied to the second term of this sum just by the definition of $w$ (with $\varphi(x',0)$ playing the role of $\varphi_1(x')$). On the other hand the function $\varphi(x)-\varphi(x',0)\varphi_2(x_n/(C_0\epsilon^N))$ is supported in $\Omega$ {\it and vanishes at $x_n=0$}, so that one sees from the definition of $u$ in \eqref{e2} that its pairing with $u$ is $0$.
\end{proof}

\subsection{Changes of coordinates for temperate distributions}\label{changevar}
In the sequel we will perform changes of coordinates which preserve Schwartz functions and temperate distributions in the following sense.\par
Let $\phi$ be a smooth diffeomorphism of the semispace $\{x_1>0\}\subset\rn$ into itself. Suppose that $\phi$ is positively homogeneous for some {\it positive} order, say $r>0$, i.e. $\phi(\lambda x)=\lambda^r\phi(x)$ for every $x\in\rn$ with $x_1>0$, $\lambda>0$. It follows that the image of every truncated cone
\[
U=\{x\in\rn:\ x_1\geq \epsilon|x|,\ |x|\geq \epsilon\},
\]
with $0<\epsilon\leq 1$ (see Figure 2 below) is contained in another truncated cone of the same type. In fact, if $y=\phi(x)$, then for some $\epsilon'>0$ we have $|y|\geq \epsilon'$ when $x\in U$ and $|x|=\epsilon$ by compactness, and therefore for every $x\in U$ by homogeneity. The same argument implies that $y_1/|y|\geq \epsilon'>0$ for $x\in U$.\par
\begin{figure}[b]
\begin{tikzpicture}
\draw[->] (1.7,1) -- (5,1) node[above]{$x_1$};
\draw[->] (2,-0.3) -- (2,2.3) node[right]{$x'$};
\draw[thick] (2.85,0.5) -- ++(-30:2cm)
      (2.85,1.5) -- ++(30:2cm) node[below]{$U$};
      (2,1) -- ++(30:1cm);
\draw[thick] ([shift=(-30:1cm)]2,1) arc (-30:30:1cm);
\end{tikzpicture}
\caption{A truncated cone in the semispace $x_1>0$.}
\end{figure}
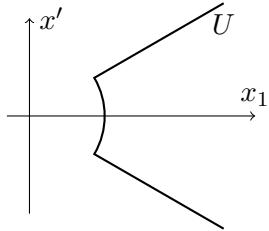

 Moreover the same applies to the inverse function $\phi^{-1}(x)$, which will be homogeneous of degree $1/r>0$.\par
 Now, let $\sn_{cone}$, $\spn_{cone}$ be the spaces of Schwartz functions and temperate distributions in $\rn$, respectively, with support contained in {\it some} truncated cone, as above. Then $\phi$ induces bijections \[
 \phi_\ast:\sn_{cone}\to \sn_{cone},\quad \phi_\ast:\spn_{cone}\to \spn_{cone}\]
  defined as follows.\par If $\psi\in\sn_{cone}$ we define $\phi_\ast\psi(x)= \psi(\phi^{-1}(x))$ for $x_1>0$ and $=0$ otherwise, and it is easy to see that $\phi_\ast\psi\in\sn_{cone}$ using the homogeneity of $\phi^{-1}$ and the support condition on $\psi$.\par If $u\in \spn_{cone}$ we define the distribution $\phi_\ast u$ by 
 \[
 \phi_\ast u(\varphi)=u(\chi \cdot \varphi\circ\phi |\det\phi'|),
 \] for every $\varphi\in\sn$, where $\chi:\rn\to[0,1]$ is a smooth function, positively homogeneous of degree $0$ for large $|x|$, $\chi(x)=1$ on a truncated cone slightly larger than one containing the support of $u$ and $\chi$ is supported in a truncated cone. It is easy to see that $\chi(x)\varphi(\phi(x))|\det\phi'(x)|$ is a function in $\sn_{cone}$ because on every truncated cone the Jacobian determinant $|{\rm det}\,\phi' |$ is smooth and has an at most polynomial growth, together with its derivatives.\par
The maps $\phi_\ast:\sn_{cone}\to \sn_{cone}$  and $\phi_\ast:\spn_{cone}\to \spn_{cone}$ are bijections, the inverses being given by $(\phi^{-1})_\ast$ on $\sn_{cone}$ and $\spn_{cone}$. \par
Observe that, more generally, if $\psi$ is a smooth function with an at most polynomial growth together with its derivatives, we can similarly define $\phi_\ast\psi(x)= \psi(\phi^{-1}(x))$ for $x_1>0$ 
and we have the formula 
\begin{equation}\label{product}
\phi_\ast (\psi u)=\phi_\ast \psi\,  \phi_\ast u
\end{equation}
if $u\in \spn_{cone}$ (the formula makes sense even if $\phi_\ast \psi$ is defined only for $x_1>0$, because $u$ and therefore $\phi_\ast u$ is supported in a truncated cone).

\section{Born--Jordan Pseudodifferential Operators}

\subsection{The Weyl correspondence}

We recall that the Heisenberg operator $\widehat{T}(z_{0})$ is defined, for
$z_{0}=(x_{0},p_{0})$, by
\begin{equation}
\widehat{T}(z_{0})\psi(x)=e^{\tfrac{i}{\hslash}(p_{0}x-\tfrac{1}{2}p_{0}%
x_{0})}\psi(x-x_{0})\label{HW}%
\end{equation}
and the Grossmann--Royer operator is defined by%
\begin{equation}
\widehat{T}_{\text{GR}}(z_{0})\psi(x)=e^{\frac{2i}{\hslash}p_{0}(x-x_{0})}%
\psi(2x_{0}-x).\label{GR}%
\end{equation}
Both are unitary operators on $L^{2}(\mathbb{R}^{n})$, and $\widehat{T}%
_{\text{GR}}(z_{0})$ is an involution: 
\[
\widehat{T}_{\text{GR}}(z_{0}%
)\widehat{T}_{\text{GR}}(z_{0})=I.
\] The two following important formulas hold:%
\begin{equation}
\widehat{T}_{\text{GR}}(z_{0})=\widehat{T}(z_{0})R^{\vee}\widehat{T}%
(z_{0})^{-1}\label{groroy2}%
\end{equation}
where $R^{\vee}=\widehat{T}_{\text{GR}}(0)$ is the reflection operator:
$R^{\vee}\psi(x)=\psi(-x)$, and%
\begin{equation}
\widehat{T}_{\text{GR}}(z_{0})\psi(x)=2^{-n}F_{\sigma}[\widehat{T}(\cdot
)\psi(x)](-z_{0})\label{defgr}%
\end{equation}
where $F_{\sigma}$ is the symplectic Fourier transform (see \cite{Birkbis}).

Let $a\in\mathcal{S}^{\prime}(\mathbb{R}^{2n})$ (hereafter to be called a
\textit{symbol}). The Weyl operator $\widehat{A}_{\mathrm{W}}={\rm Op}_{\mathrm{W}%
}(a)$ is the operator $\mathcal{S}(\mathbb{R}^{n})\longrightarrow
\mathcal{S}^{\prime}(\mathbb{R}^{n})$ defined by
\begin{equation}
\widehat{A}_{\mathrm{W}}\psi=\left(  \tfrac{1}{\pi\hslash}\right)  ^{n}\int_{\rnn}
a(z_{0})\widehat{T}_{\text{GR}}(z_{0})\psi dz_{0}\label{aweyl1}%
\end{equation}
which is equivalent, using (\ref{defgr}), to
\begin{equation}
\widehat{A}_{\mathrm{W}}\psi=\left(  \tfrac{1}{2\pi\hslash}\right)  ^{n}\int_{\rnn}
a_{\sigma}(z_{0})\widehat{T}(z_{0})\psi dz_{0}\label{aweyl2}%
\end{equation}
where $a_{\sigma}=F_{\sigma}a$ ($a_{\sigma}$ is sometimes called the covariant
Weyl symbol of $\widehat{A}_{\mathrm{W}}$). These integrals are meant in the weak sense. It is important to recall (see e.g. \cite[Chapter 10]{Birkbis}) the following result. \par\medskip
{\it Every linear continuous operator $\widehat{A}:\sn\to\spn$ can be written in a unique way in Weyl form, i.e. there exists a unique symbol $a\in\spnn$ such that $\widehat{A}={\rm Op}_{\mathrm{W}}(a)$}.\par\medskip 
 Notice that when 
$a\in \mathcal{S}(\mathbb{R}^{2n})$ formula (\ref{aweyl1}) can be rewritten in the
familiar form%
\begin{equation}
\widehat{A}_{\mathrm{W}}\psi(x)=\left(  \tfrac{1}{2\pi\hslash}\right)  ^{n}\int_{\rnn}
e^{\frac{i}{\hslash}p(x-y)}a(\tfrac{1}{2}(x+y),p)\psi(y)dpdy.\label{aweyl3}%
\end{equation}

\subsection{Born--Jordan operators}

The Born--Jordan operator $\widehat{A}_{\mathrm{BJ}}={\rm Op}_{\mathrm{BJ}}(a)$ is
constructed as follows: one first defines the Shubin $\tau$-operator
$\widehat{A}_{\tau}=\operatorname*{Op}_{\tau}(a)$ by
\begin{equation}
\widehat{A}_{\tau}\psi=\int_{\rnn} a_{\sigma}(z_{0})\widehat{T}_{\tau}(z_{0})\psi
dz_{0}\label{atauharmonic}%
\end{equation}
where $\widehat{T}_{\tau}(z)$ is the unitary operator on $L^{2}(\mathbb{R}%
^{n})$ defined by
\begin{equation}
\widehat{T}_{\tau}(z_{0})=e^{\frac{i}{\hslash}(\tau-\frac{1}{2})p_{0}x_{0}%
}\widehat{T}(z_{0}).\label{tatau}%
\end{equation}
One thereafter defines%
\[
\widehat{A}_{\mathrm{BJ}}\psi=\int_{0}^{1}\widehat{A}_{\tau}\psi d\tau.
\]
Using the obvious formula%
\[
\Theta
(z_{0}):=\int_{0}^{1}e^{\frac{i}{\hslash}(\tau-\frac{1}{2})p_{0}x_{0}}d\tau=\begin{cases}
\displaystyle\frac{\sin(p_{0}x_{0}/2\hslash)}{p_{0}x_{0}/2\hslash}&\textrm{for}
\ p_{0}x_{0}\neq0\\
1&\textrm{for}\ p_{0}x_{0}=0
\end{cases}
\]
one thus has%
\[
\widehat{A}_{\mathrm{BJ}}\psi=\int a_{\sigma}(z_{0})\widehat{T}_{\mathrm{BJ}%
}(z_{0})\psi dz_{0}%
\]
where $\widehat{T}_{\mathrm{BJ}}(z_{0})=\Theta(z_{0})\widehat{T}(z_{0})$.

It follows that $\widehat{A}_{\mathrm{BJ}}(a)$ is the Weyl operator with
covariant symbol 
\begin{equation}\label{eq11}
(a_{\mathrm{W}})_{\sigma}=\Theta a_\sigma.
\end{equation} 
The Weyl symbol of
$\widehat{A}_{\mathrm{BJ}}(a)$ is thus (taking the symplectic Fourier transform)
\begin{equation}
a_{\mathrm{W}}=\left(  \tfrac{1}{2\pi\hslash}\right)  ^{n}a\ast\Theta_{\sigma
}.\label{aw}%
\end{equation}
Conversely, assume that  $\widehat{A}={\rm Op}_{\mathrm{W}}(a_{\mathrm{W}})$. Then $\widehat{A}={\rm Op}_{\mathrm{BJ}}(a)$ provided that $a$ satisfies
$
 (a_{\mathrm{W}})_{\sigma}=\Theta a_\sigma .
$

\section{The case of polynomial symbols}\label{polynomial}

In this section we work in dimension $n=1$ (for simplicity) and we study in detail the Born-Jordan quantization of  polynomial symbols.\par
 Let $\mathbb{C}[x,p]$ the ring of polynomials generated by the two
indeterminates $x$ and $p$: it consists of all finite formal sums
$a=\sum_{r,s}\alpha_{rs}x^{r}p^{s}$ where the coefficients $\alpha_{rs}$ are
complex numbers; it is assumed that $x^{r}p^{s}=p^{s}x^{r}$ hence
$\mathbb{C}[x,p]$ is a commutative ring. We identify $\mathbb{C}[x,p]$ with
the corresponding ring of polynomial functions. We will denote by
$\mathbb{C}[\widehat{x},\widehat{p}]$ the Weyl algebra; it is the universal
enveloping algebra of the Heisenberg Lie algebra \cite{coutino,tosiek}, and is
realized as the non-commutative unital algebra generated by $\widehat{x}$ and
$\widehat{p}$, two indeterminates satisfying the commutation relation
\begin{equation}
\lbrack\widehat{x},\widehat{p}]=\widehat{x}\widehat{p}-\widehat{p}\widehat
{x}=i\hbar\mathrm{1}\label{CCR}%
\end{equation}
where $\mathrm{1}$ is the unit of $\mathbb{C}[\widehat{x},\widehat{p}]$; we
will abuse notation by writing $i\hbar\mathrm{1}\equiv i\hbar$ . We choose
here for $\widehat{x}$ the operator of multiplication by $x$ and $\widehat
{p}=-i\hbar\partial_{x}$. Each $\widehat{A}\in\mathbb{C}[\widehat{x}%
,\widehat{p}]$ can be written (uniquely) as a finite sum of terms $\widehat
{x}^{r}\widehat{p}^{s}$. The Weyl and Born--Jordan quantizations are linear
mappings $\mathbb{C}[x,p]\longrightarrow\mathbb{C}[\widehat{x},\widehat{p}]$;
this immediately follows by the linearity of quantization from the formulas%
\begin{align}
\operatorname*{Op}\nolimits_{\mathrm{W}}(x^{r}p^{s}) &  =\sum_{\ell=0}%
^{\min(r,s)}(-i\hbar)^{\ell}\binom{s}{\ell}\binom{r}{\ell}\frac{\ell!}%
{2^{\ell}}\widehat{x}^{r-\ell}\widehat{p}^{s-\ell}.\label{Kerner3}\\
\operatorname*{Op}\nolimits_{\mathrm{BJ}}(x^{r}p^{s}) &  =\sum_{\ell=0}%
^{\min(r,s)}(-i\hbar)^{\ell}\binom{s}{\ell}\binom{r}{\ell}\frac{\ell!}{\ell
+1}\widehat{x}^{r-\ell}\widehat{p}^{s-\ell}\label{Kerner2}%
\end{align}
which easily follow from (\ref{weylmono1}) and (\ref{bjmono1}) by repeated use
of the commutation relation (\ref{CCR}). 

\begin{remark}
It follows from the formulas above that $\operatorname*{Op}%
\nolimits_{\mathrm{W}}(x^{r}p^{s})\neq\operatorname*{Op}\nolimits_{\mathrm{BJ}%
}(p^{s}x^{r})$ as soon as $r\geq2$ and $s\geq2$; for instance%
\begin{align}
\operatorname*{Op}\nolimits_{\mathrm{W}}(x^{2}p^{2}) &  =\widehat{x}%
^{2}\widehat{p}^{2}-2i\hbar\widehat{x}\widehat{p}-\tfrac{1}{2}\hbar
^{2}\label{opbj22}\\
\operatorname*{Op}\nolimits_{\mathrm{BJ}}(x^{2}p^{2}) &  =\widehat{x}%
^{2}\widehat{p}^{2}-2i\hbar\widehat{x}\widehat{p}-\tfrac{2}{3}\hbar
^{2}.\label{opw22}%
\end{align}

\end{remark}

It is well-known that Weyl quantization is an isomorphism of vector spaces $\mathbb{C}%
[x,p]\longrightarrow\mathbb{C}[\widehat{x},\widehat{p}]$ (Cohen
\cite{Cohen1,Cohenbook}); explicit formulas for the inverse can be found in
the literature \cite{doga15,tosiek,prza}, but they are very complicated and we
will not reproduce them here. It follows from this property that:

\begin{proposition}
The Born--Jordan quantization of polynomials is an isomorphism of vector spaces
\[
\operatorname*{Op}\nolimits_{\mathrm{BJ}}:\mathbb{C}[x,p]\longrightarrow
\mathbb{C}[\widehat{x},\widehat{p}].
\]

\end{proposition}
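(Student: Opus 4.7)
The plan is to exploit the already-quoted fact that $\operatorname*{Op}\nolimits_{\mathrm{W}}:\mathbb{C}[x,p]\to\mathbb{C}[\widehat{x},\widehat{p}]$ is a vector-space isomorphism, together with a triangularity observation drawn from the explicit formulas \eqref{Kerner3} and \eqref{Kerner2}. Concretely, it suffices to verify that the linear map
\[
\Phi:=\operatorname*{Op}\nolimits_{\mathrm{W}}^{-1}\circ\operatorname*{Op}\nolimits_{\mathrm{BJ}}:\mathbb{C}[x,p]\longrightarrow\mathbb{C}[x,p]
\]
is a bijection, for then $\operatorname*{Op}\nolimits_{\mathrm{BJ}}=\operatorname*{Op}\nolimits_{\mathrm{W}}\circ\Phi$ is a composition of isomorphisms.

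To analyze $\Phi$ I would introduce the total-degree filtration $\mathbb{C}[x,p]_{\leq N}:=\mathrm{span}\{x^r p^s: r+s\leq N\}$ on the classical side, and the analogous filtration $\mathbb{C}[\widehat{x},\widehat{p}]_{\leq N}:=\mathrm{span}\{\widehat{x}^r\widehat{p}^s: r+s\leq N\}$ on the Weyl algebra. The latter is well defined because the commutator \eqref{CCR} has total degree $0<2$, so the PBW straightening of any word in $\widehat{x},\widehat{p}$ produces the ordered monomial $\widehat{x}^r\widehat{p}^s$ plus terms of strictly lower total degree; in particular $\{\widehat{x}^r\widehat{p}^s\}_{r,s\geq0}$ is a basis of $\mathbb{C}[\widehat{x},\widehat{p}]$. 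Inspecting \eqref{Kerner3} and \eqref{Kerner2}, the $\ell=0$ contribution in each sum is $\widehat{x}^r\widehat{p}^s$, while for $\ell\geq1$ the corresponding term has operator degree $r+s-2\ell\leq r+s-2$. Hence
\[
\operatorname*{Op}\nolimits_{\mathrm{W}}(x^r p^s),\ \operatorname*{Op}\nolimits_{\mathrm{BJ}}(x^r p^s)\ \in\ \widehat{x}^r\widehat{p}^s+\mathbb{C}[\widehat{x},\widehat{p}]_{\leq r+s-2}.
\]

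Consequently $\Phi(x^r p^s)\equiv x^r p^s\pmod{\mathbb{C}[x,p]_{\leq r+s-2}}$, so $\Phi$ preserves the filtration and, in the monomial basis arranged by increasing total degree, is represented on each finite-dimensional $\mathbb{C}[x,p]_{\leq N}$ by a unipotent upper-triangular matrix (ones on the diagonal). Such a matrix is invertible, and passing to the direct limit $N\to\infty$ yields a bijection of $\mathbb{C}[x,p]$. This will conclude the proof. I do not expect a substantial obstacle: the entire content is the triangularity visible in \eqref{Kerner3}--\eqref{Kerner2}, and the only bookkeeping concerns the well-definedness of the degree filtration on $\mathbb{C}[\widehat{x},\widehat{p}]$, which follows directly from \eqref{CCR}. (Alternatively, one could bypass the Weyl isomorphism altogether and run the same triangularity argument in two steps, deducing injectivity of $\operatorname*{Op}\nolimits_{\mathrm{BJ}}$ by comparing top-degree components and surjectivity by induction on $N$; this gives an independent proof of the proposition.)
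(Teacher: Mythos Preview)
Your argument is correct. Both the paper and you reduce to showing that a certain endomorphism is a bijection by exploiting the known Weyl isomorphism, but the methods diverge from there. You use a purely algebraic triangularity argument: formulas \eqref{Kerner3} and \eqref{Kerner2} show that both quantizations send $x^r p^s$ to $\widehat{x}^r\widehat{p}^s$ plus terms of total degree $\leq r+s-2$, so $\Phi$ is unipotent with respect to the degree filtration and hence invertible. The paper instead works on the Fourier side: injectivity is deduced from the fact that $a_\sigma$ is supported at $0$ while $\Theta(0)\neq0$, and surjectivity is obtained by expanding $\Theta^{-1}$ as a Laurent series in $xp$ (with Bernoulli-number coefficients) and multiplying against $\delta^{(r)}_x\otimes\delta^{(s)}_p$, which produces an explicit polynomial preimage. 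Your approach is shorter and entirely elementary, needing nothing beyond the combinatorial identities already displayed; the paper's approach is more computational but has the advantage of yielding an explicit closed-form inverse and of foreshadowing the analytic division-by-$\Theta$ viewpoint that drives the rest of the paper.
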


\begin{proof}
We begin by noting that the Weyl transform being an isomorphism $\mathbb{C}%
[x,p]\longrightarrow\mathbb{C}[\widehat{x},\widehat{p}]$, every $\widehat
{A}\in\mathbb{C}[\widehat{x},\widehat{p}]$ can be written $\widehat
{A}=\operatorname*{Op}\nolimits_{\mathrm{W}}(b)$ for a unique $b\in
\mathbb{C}[x,p]$. This allows us to define an endomorphism $T$ of
$\mathbb{C}[\widehat{x},\widehat{p}]$ by
\[
T(\operatorname*{Op}\nolimits_{\mathrm{W}}(a))=\operatorname*{Op}%
\nolimits_{\mathrm{BJ}}(a)=\operatorname*{Op}\nolimits_{\mathrm{W}}(a\ast \Theta_\sigma).
\]
Let us show that $T$ is bijective; this will prove our assertion. First, it is
clear that $T$ is injective: if $T(\operatorname*{Op}\nolimits_{\mathrm{W}%
}(a))=0$ then $\Theta a_\sigma$ is zero as a distribution, but this is only
possible if $a=0$ since $a$ is a polynomial, so that $a_\sigma$ is supported at $0$, and $\Theta$ does not vanish in a neighborhood of $0$.\par
 Let us now
prove that $T$ is surjective. Since $\mathbb{C}[x,p]$ is spanned by the
monomials $b(x,p)=x^{r}p^{s}$ it is sufficient to show that there exists
$a\in\mathbb{C}[x,p]$ such that $F_{\sigma}b=\Theta F_{\sigma}a$; since
$F_{\sigma}a(z)=Fa(Jz)$ where $F$ is the usual ($\hbar$-dependent) Fourier
transform on $\mathbb{R}^{2n}$ and $\Theta(Jz)=\Theta(z)$, this is equivalent
to the equation $Fb(z)= \Theta(z) Fa(z)$. Since
\[
Fb(z)=F(x^{r}\otimes p^{s})=2\pi \hslash(i\hbar)^{r+s}\delta_{x}^{(r)}\otimes\delta_{p}^{(s)}%
\]
the Fourier transform of $a$ is then given by
\[
Fa(x,p)= 2\pi \hslash (i\hbar)^{r+s}\Theta
(x,p)^{-1} \delta_{x}^{(r)}\otimes\delta_{p}^{(s)}.
\]
Using the Laurent series expansion of $1/\sin x$ we have
\[
\Theta(x,p)^{-1}=\sum_{k=0}^{\infty}a_{k}(2\hbar)^{-2k}x^{2k}p^{2k}%
\]
where the coefficients are expressed in terms of the Bernoulli numbers $B_{n}$
by
\[
a_{k}=\frac{(-1)^{k-1}(2^{2k}-2)B_{2k}}{(2k)!};
\]
the series is convergent in the open set $|xp|<2\hbar\pi$. It follows that%
\begin{align*}
Fa(x,p) &  =2\pi\hslash\sum_{k=0}^{\infty}a_{k}(2\hbar)^{-2k}(x^{2k}\delta_{x}%
^{(r)})(p^{2k}\delta_{p}^{(s)})\\
&  =2\pi\hslash\sum_{k=0}^{n_{r,s}}a_{k}\frac{(2\hbar)^{-2k}r!s!}{(r-2k)!(s-2k)!}%
\delta_{x}^{(r-2k)}\delta_{p}^{(s-2k)}
\end{align*}
with $n_{r,s}=[\frac{1}{2}\min(r,s)]$ ($[\cdot]$ denoting the integer part). Setting
\[
b_{k}=a_{k}\frac{(2\hbar)^{-2k}r!s!}{(r-2k)!(s-2k)!}%
\]
and noting that
\[
2\pi\hslash\delta_{x}^{(r-2k)}\delta_{p}^{(s-2k)}=(i\hbar)^{-(r+s-4k)}F(x^{r-2k}p^{s-2k})
\]
we have
\[
a(x,p)=\sum_{k=0}^{n_{r,s}}b_{k}(i\hbar)^{-(r+s-4k)}x^{r-2k}p^{s-2k}%
\]
hence $a\in\mathbb{C}[x,p]$.
\end{proof}

\section{Invertibility of the Born-Jordan quantization}
In this section we investigate the injectivity and surjectivity of the map $\spnn\to\spnn$ given by
\begin{equation}\label{map}
a\longmapsto \left(\tfrac{1}{\tph}\right)^n a \ast \Theta_\sigma
\end{equation}
namely the map which gives the Weyl symbol of an operator with Born-Jordan symbol $a$, cf.\ \eqref{aw}.\par
We have the following result.
\begin{theorem}\label{mainteo0}
The equation 
\[
\left(\tfrac{1}{\tph}\right)^n a \ast \Theta_\sigma=b.
\]
admits a solution $a\in\spnn$, for every $b\in\spnn$.
\end{theorem}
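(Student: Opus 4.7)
The plan is to first apply the symplectic Fourier transform to both sides. Using the identity $(a\ast b)_\sigma = (2\pi\hslash)^n a_\sigma b_\sigma$ and the involutivity $(\Theta_\sigma)_\sigma = \Theta$, the equation $(1/(2\pi\hslash))^n a\ast\Theta_\sigma = b$ becomes $\Theta\, a_\sigma = b_\sigma$. Since $F_\sigma$ is a topological isomorphism of $\mathcal{S}'(\mathbb{R}^{2n})$, the theorem reduces to the following division statement: \emph{for every $v\in\mathcal{S}'(\mathbb{R}^{2n})$, there exists $u\in\mathcal{S}'(\mathbb{R}^{2n})$ with $\Theta u = v$}.

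Write $\Theta(x,p) = \phi(px)$ with $\phi(t) = \sin(t/2\hslash)/(t/2\hslash)$, whose only real zeros are simple and located at $t_k = 2\pi k\hslash$, $k\in\mathbb{Z}\setminus\{0\}$. The idea is to reduce the $2n$-dimensional division to $1$-dimensional division by $\phi$, with the remaining coordinates playing the role of parameters. I would first decompose $v$ via a smooth partition of unity on $\mathbb{R}^{2n}$ into a compactly supported piece (on which $\Theta$ has only finitely many zero hypersurfaces $\{px=t_k\}$; this piece can be handled via Proposition \ref{prolo}) plus pieces supported in truncated cones where some coordinate dominates, say $\{x_1 \geq \epsilon(|x|+|p|)\}$. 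By the obvious symmetries of $\Theta$ it suffices to treat one such cone. On it I apply the change of variables $\Psi(x,p)=(x_1,x_2,\ldots,x_n,px,p_2,\ldots,p_n)$, a smooth diffeomorphism with Jacobian $x_1$ bounded away from zero on the cone; pull-back by $\Psi^{-1}$ preserves temperate distributions supported in the cone (by an argument in the spirit of Subsection \ref{changevar}), and transforms $\Theta$ into $\phi(y_{n+1})$, a function of the single new coordinate $y_{n+1}$.

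It remains to solve $\phi(y_{n+1})\tilde u = \tilde v$ in $\mathcal{S}'(\mathbb{R}^{2n})$. Introduce a partition of unity on $\mathbb{R}$, $1 = \eta_\infty(s) + \sum_{k\in\mathbb{Z}}\chi(s-t_k)$, with $\chi\in C^\infty_c(\mathbb{R})$ supported in a small interval around $0$ and $\eta_\infty$ vanishing near every zero $t_k$. The $\eta_\infty$-piece of $\tilde v$ is divided directly, since $\eta_\infty/\phi$ is smooth and slowly increasing. For each $k\neq 0$, write $\phi(y_{n+1}) = (y_{n+1}-t_k)\,g_k(y_{n+1})$ near $t_k$ with $g_k$ smooth and nonvanishing, and apply Proposition \ref{pro2} with $t=t_k$ (and with $y_{n+1}$ playing the role of $x_n$) to obtain $\tilde u_k\in\mathcal{S}'(\mathbb{R}^{2n})$ satisfying $(y_{n+1}-t_k)\tilde u_k = \chi(y_{n+1}-t_k)\tilde v/g_k$, together with the estimate $|\tilde u_k(\varphi)|\leq C\|\varphi\|_N$ for constants $C,N$ \emph{independent of $k$}.

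The main obstacle is precisely the convergence of the series $\tilde u = \eta_\infty \tilde v/\phi + \sum_{k}\tilde u_k$ in $\mathcal{S}'(\mathbb{R}^{2n})$, and this is exactly what the uniform-in-$t$ bound of Proposition \ref{pro2} is designed to provide: since the cutoffs $\chi(\cdot-t_k)$ are translates of a fixed compact set centered at $t_k = 2\pi k\hslash$, the rapid decay of Schwartz functions in the $y_{n+1}$ direction forces absolute convergence of the series. Reversing the change of variables and summing over the cones of the initial partition then produces the required $u$ with $\Theta u = v$, completing the argument.
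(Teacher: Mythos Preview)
Your overall strategy coincides with the paper's first proof: reduce to $\Theta u=v$ via the symplectic Fourier transform, localize by a partition of unity into a central piece plus truncated cones, change variables on each cone so that $\Theta$ becomes $\mathrm{sinc}(y_{2n}/2\hslash)$ in a single coordinate, then decompose along the zeros by translates of a fixed bump, invoke Proposition~\ref{pro2} for uniform-in-$k$ bounds, and sum. Two technical points deserve correction.

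First, the central piece. Proposition~\ref{prolo} is not the right tool here: that proposition solves $x_n u=v$ \emph{with a prescribed support constraint} $\{x_n\ge f(x')\}$, and its purpose in the paper is the support-preserving division needed in Theorem~\ref{mainteo2}. For the present theorem there is no support condition to enforce, and on a generic compact set you would still have to straighten the zero hypersurfaces before anything like Proposition~\ref{prolo} could be invoked. The paper simply takes the ball small enough (radius $r<\sqrt{4\pi\hslash}$) so that $\Theta\neq0$ there, and divides directly; this is the clean fix.

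Second, your change of variables $\Psi(x,p)=(x_1,\ldots,x_n,px,p_2,\ldots,p_n)$ is \emph{not} positively homogeneous (it mixes degree~1 and degree~2 components), so Subsection~\ref{changevar} does not apply as stated: that subsection is written precisely for homogeneous diffeomorphisms, and the image of a truncated cone under your $\Psi$ is not a truncated cone. The paper instead uses the degree-$2$ map $y_1=z_1^2,\ y_j=z_1 z_j\ (j\neq 1,2n),\ y_{2n}=xp$, to which Subsection~\ref{changevar} applies verbatim. Your $\Psi$ can in fact be made to work (on the cone $x_1\ge\epsilon|z|$ one checks $|\Psi(z)|\ge\epsilon|z|$, $|z|\gtrsim|\Psi(z)|^{1/2}$, and all derivatives of $\Psi,\Psi^{-1}$ have polynomial growth there), but this requires a separate argument rather than an appeal to Subsection~\ref{changevar}. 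Either supply that argument or switch to the homogeneous change of variables.
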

\begin{proof}
Taking the symplectic Fourier transform we are reduced to prove that the equation 
\[
 \Theta a=b
\]
admits at least a solution $a\in \spnn$, for every $b\in\spnn$. 
This is a problem of division of {\it temperate} distributions. We provide two proofs. \par\medskip
{\it First proof.}\par

We localize the problem by considering a finite and smooth partition of unity $\psi_0(z)$, $\psi^\pm_j(z)$, $j=1,\ldots,2n$, in phase space, where $\psi_0$ has support in a ball $|z|\leq r$, with $r<\sqrt{4\pi\hslash}$, and $\psi^+_j$ for $j=1,\ldots,2n$, is supported in a truncated cone (cf.\  Section \ref{changevar}) of the type
\[
U^+_j=\{z\in\rnn:\ z_j\geq \epsilon |z|,\ |z|\geq \epsilon\}
\]
contained in the semispace $z_j>0$, and similarly $\psi^-_j$ is supported in a similar truncated cone contained in the semispace $z_j<0$, with $\psi^\pm_j$ homogeneous of degree $0$ for large $z$ (it is easy to see that such a partition of unity can be constructed if $\epsilon$ is small enough, e.g.\ if $\epsilon<r/2$ and $\epsilon<1/(2\sqrt{n})$).\par
  It is clear that if $a_0$ and $a_j^{\pm}$, $j=1,\ldots,2n$, solve in $\spnn$ the equations $ \Theta a_0=\psi_0 b$ and $\Theta a_j^{\pm}= \psi_j^{\pm}b$, then $a:= a_0+\sum_{j=1}^n (a^+_j+a_j^-)$ solves $\Theta a=b$.\par
The equation $\Theta a_0=\psi_0 b$ is easily solved as in the proof of Proposition \ref{pro6}, using the fact that $\psi_0 b$ is supported in a closed ball where $\Theta\not=0$. \par
Let us now solve the equation
\[
\Theta a=b
\]
where $b\in\spnn$ is supported in a truncated cone in the semispace, say, $z_1>0$ in phase space. We look for $a\in\spnn$ supported in a truncated cone as well. We apply the following algebraic change of variables in phase space:
\begin{align*}
y_1&=z_1^2,\ y_2=z_1z_2, \ldots,\ y_n= z_1z_n,\\
 y_{n+1}&=z_1z_{n+2},\ldots,\ y_{2n-1}=z_1 z_{2n},\ y_{2n}=x p=\sum_{j=1}^n z_j z_{j+n}
\end{align*}
where $z=(x,p)$. \par It is easy to check that the map $z\longmapsto y$ is a diffeomorphism of the semispace $z_1>0$ into itself ($y_1>0$)\footnote{
The inverse change of variables is given by 
\begin{align*}
z_1&=\sqrt{y_1},\ z_2=y_2/\sqrt{y_1},\ldots,\ z_n=y_n/\sqrt{y_1},\\
z_{n+1}&=\Big(y_{2n}-\sum_{j=2}^n y_j y_{n+j}/{y_1}\Big)/\sqrt{y_1},\ z_{n+2}=y_{n+1}/\sqrt{y_1},\ldots, z_{2n}=y_{2n-1}/\sqrt{y_1}.
\end{align*}
}, and moreover it is homogeneous of degree 2. We now apply the remarks in Section \ref{changevar} (where the dimension of the space is now $2n$), in particular \eqref{product}, and we are reduced to solve (in the new coordinates) the equation
\begin{equation}\label{eq4}
\frac{\sin (y_{2n}/2\hslash)}{y_{2n}/2\hslash} a=b
\end{equation}
where $b\in\spnn$ is supported in a truncated cone in the semispace $y_1>0$ and we look for $a\in\spnn$ similarly supported in a truncated cone in the same semispace. \par 
We now consider a partition of unity in $\mathbb{R}$ obtained by translation of a fixed function, of the type $\chi(y_{2n}-2\pi k\hslash)$, $k\in\mathbb{Z}$, where $\chi\in C^\infty_c(\mathbb{R})$ is a fixed function supported in the interval $[-(3/2)\pi\hslash,(3/2)\pi\hslash]$. Observe that on the support of $\chi(y_{2n}-2\pi k\hslash)$ the function $\frac{\sin (y_{2n}/2\hslash)}{y_{2n}/2\hslash}$ has only a simple zero at $y_{2n}=2\pi k\hslash$, for $k\not=0$, whereas it does not vanish on the support of $\chi$ (case $k=0$).  \par
We now solve, for every $k\in\mathbb{Z}$, the equation 
\begin{equation}\label{eq5}
\frac{\sin (y_{2n}/2\hslash)}{y_{2n}/2\hslash} a_k=\chi(y_{2n}-2\pi k\hslash)b.
\end{equation}
We suppose $k\not=0$, the case $k=0$ being easier. Since the function \[
\frac{\sin (y_{2n}/2\hslash)}{(y_{2n}-2k\pi \hslash)y_{2n}/2\hslash}
\]
 is smooth and does not vanishes on the support of $\chi(y_{2n}-2\pi k\hslash)$ it is sufficient to solve the equation 
\[
 (y_{2n}-2k\pi \hslash) a_k=\chi(y_{2n}-2\pi k\hslash)\Big[\frac{(y_{2n}-2k\pi \hslash)y_{2n}/2\hslash}{\sin (y_{2n}/2\hslash)} b\Big].
\]
 Observe, that the function $\frac{y_{2n}-2k\pi \hslash}{\sin (y_{2n}/2\hslash)}$ has derivatives uniformly bounded with respect to $k$ (in fact we have $\sin(y_{2n}/2\hslash)=\pm\sin ((y_{2n}-2\pi k\hslash)/2\hslash )$, according to the parity of $k$, and the ``{\rm sinc}'' function has bounded derivatives of any order). It follows from this remark and Proposition \ref{pro2} that there exists a solution $a_k\in \spnn$ of the above equation, satisfying the estimate
\[
|a_k(\varphi)|\leq C\|\varphi\|_N \quad\forall\varphi\in\snn
\]
for some constants $C,N>0$ independent of $k$.\par Moreover we can suppose that all the $a_k$'s are supported in a fixed truncated cone (by multiplying by a cut-off function in phase space with the same property as $\psi^+_1(z)$ above, and $=1$ on a truncated cone slightly larger than one containing the support of $b$). We can also multiply $a_k$ by a cut-off function $\tilde{\chi}(y_{2n}-2\pi k\hslash)$, where $\tilde{\chi}\in C^\infty_c(\mathbb{R})$, and $\tilde{\chi}=1$ in a neighborhood of the support of $\chi$, and obtain new solutions
\[
\tilde{a}_k:= \tilde{\chi}(y_{2n}-2\pi k\hslash)a_k
\]
to \eqref{eq5}, satisfying 
\begin{equation}\label{stima}
|\tilde{a}_k(\varphi)|\leq C\|\tilde{\chi}(y_{2n}-2\pi k\hslash) \varphi\|_N
\end{equation}
for every $\varphi\in\snn$; see Figure 3 below.
 \begin{figure}[b]
\begin{tikzpicture}
\draw[->] (1.7,1) -- (7,1) node[above]{$y_1$};
\draw[->] (2,-0.3) -- (2,3) node[right]{$y_{2}$};
\draw[thick] (2.85,0.5) -- ++(-30:2cm)
      (2.85,1.5) -- ++(30:3cm);
 \draw[thick] (3.7,2) -- (6.7,2);
  \draw[dashed] (2,2) -- (3.7,2);
\draw[] (4.2,2.3) -- (7.2,2.3);
\draw[] (3.2,1.7) -- (6.3,1.7);
\draw[] (3.5,1.7) -- ++(30:1.2cm);
\draw[] (3.8,1.7) -- ++(30:1.2cm);
\draw[] (4.1,1.7) -- ++(30:1.2cm);
\draw[] (4.4,1.7) -- ++(30:1.2cm);
\draw[] (4.7,1.7) -- ++(30:1.2cm);
\draw[] (5,1.7) -- ++(30:1.2cm);
\draw[] (5.3,1.7) -- ++(30:1.2cm);
\draw[] (5.6,1.7) -- ++(30:1.2cm);
\draw[] (5.9,1.7) -- ++(30:1.2cm);
  \node at (1.4,2) {$2\pi k\hslash$};
\draw[thick] ([shift=(-30:1cm)]2,1) arc (-30:30:1cm);
\end{tikzpicture}
\caption{The distribution $\tilde{a}_k$ is supported in the strip ($n=1$ in this figure).}
\end{figure}
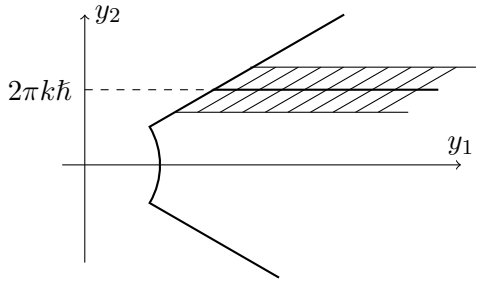\par
Now we claim that $a:=\sum_{k\in\mathbb{Z}}\tilde{a}_k$ solves \eqref{eq4}. We have only to check that the series converges in $\spnn$. Let us verify that, given $\varphi\in\snn$,  the series 
\[
\sum_{n\in\mathbb{Z}}\tilde{a}_k(\varphi)
\]
converges absolutely.\par On the support of $\tilde{\chi}(y_{2n}-2\pi k\hslash)$ we have $ 1+|y|\geq 1+|y_{2n}|\geq C(1+|k|)$, and $|y^\alpha\partial_y^\beta\varphi(y)|\leq C'_{N'}(1+|k|)^{-N'}$ {\it for every} $N'$ and $\alpha,\beta\in\mathbb{N}^{2n}$. Hence by \eqref{stima} we obtain
\[
|\tilde{a}_k(\varphi)|\leq C''_{N'}(1+|k|)^{-N'}, 
\]
and it is sufficient to take $N'=2$ for the above series to converge absolutely.  \par\medskip
{\it Second proof.}\par
In \cite{hormander0} it was proved that the equation $Pu=v$ is always solvable in $\spn$ if $P$ is a non identically zero polynomial. As observed there (page 556), that proof continues to hold if the polynomial $P$ is replaced by a smooth function which satisfies the estimates (4.3) and (4.10)  in that paper. Here we are interesting in the division by the function $\Theta(z)$, which has only simple zeros, and those estimates read
\begin{equation}\label{ho1}
|\Theta(z)|\geq C{\rm dist}(z,Z)^{\mu'} (1+|z|)^{-\mu''}\quad\forall z\in\rnn
\end{equation}
and 
\begin{equation}\label{ho2}
|\nabla\Theta(z)|\geq C (1+|z|)^{-\mu''}\quad \forall z\in Z
\end{equation}
for some $C,\mu',\mu''>0$, where $Z=\{z\in\rnn:\,\Theta(z)=0\}$. \par
To check that these estimates are satisfied, let ${\rm sinc} (t)=\sin t/t$ (${\rm sinc}(0)=1)$, so that $\Theta(z)=\Theta(x,p)={\rm sinc}\, (xp/2\hslash)$, $z=(x,p)$. Observe that at points $t$ where ${\rm sinc}\, (t/2\hslash)=0$ we have
\[
|\frac{d}{dt}{\rm sinc}\, (t/2\hslash)|=1/|t|
\]
so that 
\[ 
|\nabla \Theta(z)|=\frac{|(x,p)|}{|xp|}\geq\frac{2}{|z|}\quad \forall z\in Z
\]
which implies \eqref{ho2} with $\mu''=1$.\par
Concerning \eqref{ho1} observe first of all that, setting $Z_0=\{2\pi k\hslash:\ k\in\mathbb{Z}, k\not=0\}\subset\mathbb{R}$ we have, for $|t|>\pi\hslash$, 
\[
|{\rm sinc}(t/2\hslash)|=\frac{2\hslash}{|t|}|\sin(t/2\hslash)| \geq \frac{2\hslash}{|t|}\frac{1}{\pi\hslash}{\rm dist}(t,Z_0)= \frac{2}{\pi|t|}{\rm dist}(t,Z_0)
\]
whereas if $|t|\leq \pi\hslash$, 
\[
|{\rm sinc}(t/2\hslash)|\geq \frac{2}{\pi}\geq \frac{1}{\pi^2\hslash}{\rm dist}(t,Z_0).
\]
In both cases we have 
\begin{equation}\label{ho3}
|{\rm sinc}(t/2\hslash)|\geq C_0 (1+|t|)^{-1}{\rm dist}(t,Z_0)
\end{equation}
for some $C_0>0$.\par
Now, \eqref{ho1} is clearly satisfied in a neighborhood of $0$, so that it is sufficient to prove it in any truncated cone contained in the semispaces $z_j>0$ or $z_j<0$, $j=1,\ldots,2n$. Consider for example a truncaded cone $U$ where $z_1>0$. We perform the change of coordinates $y=y(z)$ in this semispace, exactly as in the previous proof, and we observe that by \eqref{ho3} we have
\[
|{\rm sinc}(y_{2n}/2\hslash)|\geq C_0(1+|y_{2n}|)^{-1}|y_{2n}-\overline{y}_{2n}|\geq C_0(1+|y|)^{-1}|y_{2n}-\overline{y}_{2n}|
\]
where $\overline{y}_{2n}\in Z_0$ is such that $|y_{2n}-\overline{y}_{2n}|={\rm dist}(y_{2n},Z_0)$. Now, for $z\in U$ we have $0<\epsilon \leq |y|\leq C |z|^2$ and moreover the inverse map $z=z(y)$ is Lipschitz in any truncated cone $U' \supset z(U)$, because the derivatives $\partial z_j/\partial y_k$ are positively homogeneous of degree $-1/2<0$, and therefore bounded in $U'$. Using these facts and setting $\overline{y}=(y_1,\ldots,y_{2n-1},\overline{y}_{2n})$, $\overline{z}=z(\overline{y})\in Z$, we conclude that for every $z\in U$,
\begin{align*}
|\Theta(z)|=|{\rm sinc}(y_{2n}/2\hslash)|&\geq C_0(1+|y|)^{-1}|y-\overline{y}|
\\
&\geq C(1+|z|)^{-2} |z-\overline{z}|,\\
&\geq C(1+|z|)^{-2} {\rm dist}(z,Z).
\end{align*}
This concludes the proof.
\end{proof}

From the previous theorem,  we obtain at once the following result. 
\begin{theorem}\label{mainteo1}
For every $b\in\spnn$ there exists a symbol $a\in\spnn$ such that $\operatorname*{Op}\nolimits_{\mathrm{BJ}}(a)=\operatorname*{Op}\nolimits_{\mathrm{W}}(b)$. \par
Hence, every linear continuous operator $\widehat{A}:\sn\to\spn$ can be written in Born-Jordan form, i.e.\ there exists a symbol $a\in\spnn$ such that $\widehat{A}=\operatorname*{Op}\nolimits_{\mathrm{BJ}}(a)$.
\end{theorem}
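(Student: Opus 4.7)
The plan is to derive Theorem \ref{mainteo1} as a direct corollary of Theorem \ref{mainteo0} combined with the Weyl representation theorem quoted earlier in the excerpt. There is essentially no new analytic content to produce; the division-by-$\Theta$ problem was already settled in Theorem \ref{mainteo0}, so the task reduces to translating that existence statement into a statement about operators.

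First I would handle the symbol-level assertion. Given $b \in \spnn$, Theorem \ref{mainteo0} supplies $a \in \spnn$ with
\[
\left(\tfrac{1}{\tph}\right)^n a \ast \Theta_\sigma = b.
\]
By formula \eqref{aw}, the right-hand side is precisely the Weyl symbol $a_{\mathrm{W}}$ of $\operatorname*{Op}\nolimits_{\mathrm{BJ}}(a)$. Hence the Weyl symbols of $\operatorname*{Op}\nolimits_{\mathrm{BJ}}(a)$ and $\operatorname*{Op}\nolimits_{\mathrm{W}}(b)$ coincide, and by the uniqueness of the Weyl representation (stated in Section 3) the two operators are equal.

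For the second, operator-theoretic, claim I would invoke the Weyl representation theorem recalled in Section 3: any linear continuous operator $\widehat{A}:\sn\to\spn$ can be written in a unique way as $\widehat{A}=\operatorname*{Op}\nolimits_{\mathrm{W}}(b)$ for some $b\in\spnn$. Feeding this $b$ into the first part produces $a\in\spnn$ with $\operatorname*{Op}\nolimits_{\mathrm{BJ}}(a)=\operatorname*{Op}\nolimits_{\mathrm{W}}(b)=\widehat{A}$, which is the desired Born--Jordan representation.

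The only real obstacle was the division problem already solved in Theorem \ref{mainteo0}; once that is in hand, the present statement is a formal consequence of \eqref{aw} and the Schwartz kernel/Weyl uniqueness theorem, so the proof amounts to little more than chaining these two facts together.
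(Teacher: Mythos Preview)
Your proposal is correct and matches the paper's approach exactly: the paper simply states that Theorem \ref{mainteo1} follows ``at once'' from Theorem \ref{mainteo0}, and your argument spells out precisely the chain of implications (via \eqref{aw} and the Weyl representation theorem) that the paper leaves implicit.
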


Concerning the injectivity of the map \eqref{map}, we begin with a simple example, which shows that the map \eqref{map} is not ono to one, even when restricted to real analytic functions which extend to entire functions in $\mathbb{C}^{2n}$.
\begin{example}\label{rem2}\rm 
Consider the Born-Jordan symbol 
\begin{equation}\label{eq2}
a(z)=e^{\frac{i}{\hslash}\sigma(z_0,z)}=e^{\frac{i}{\hslash}(p_0 x-x_0 p)},
\end{equation} where $z_0=(x_0,p_0)$ is any point on the zero set $\Theta(z)=0$.\par
 The symplectic Fourier transform of $a$ is
\[
a_\sigma(z)=(2\pi\hslash)^n \delta(z-z_0)
\]
and therefore $\Theta a_\sigma=0$, because $\Theta(z_0)=0$. Hence the corresponding Weyl symbol is $a_{\rm W}=0$ by \eqref{eq11}.\par
Observe that the symbol $a$ in \eqref{eq2} extends to an entire function $a(\zeta_1,\zeta_2)=e^{\frac{i}{\hslash}(p_0 \zeta_1-x_0 \zeta_2)}$ in $\mathbb{C}^{2n}$, satisfying the estimate
\[
|a(\zeta)|\leq \exp\Big(\frac{r}{\hslash}|{\rm Im}\,\zeta|\Big),\quad \zeta\in \mathbb{C}^{2n},
\] 
where $r=|(p_0,-x_0)|=|z_0|$.\par
For future reference we observe that the minimum value of $r$ is reached for the points $z_0$ at which the hypersurface $\Theta(z)$ has minimum distance from $0$, which turns out to be $r=\sqrt{4\pi\hslash}$. For example one can consider $x_0=p_0= (2\pi \hslash/n)^{1/2}(1,\ldots,1)$, so that $x_0 p_0=2\pi\hslash$ and $|x_0|^2+|p_0|^2=4\pi\hslash$ (see Figure 4).
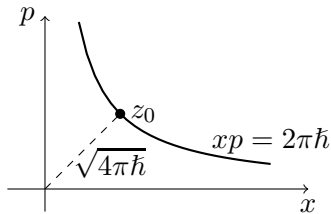
\begin{figure}[b]
\begin{tikzpicture}
\draw[->] (-0.5,0) -- (3.5,0) node[below]{$x$};
\draw[->] (0,-0.3) -- (0,2.3) node[left]{$p$};
\draw[thick, domain=0.45:3] plot (\x, {1/\x}) node[above] {$xp=2\pi\hslash$};
\draw[dashed] (0,0) -- (1,1);
\node at (0.86,0.35) {$\sqrt{4\pi\hslash}$};
\filldraw (1, 1) circle (1.7pt) node[right]{$z_0$};
\end{tikzpicture}
\caption{The point $z_0$ minimizes the distance of the zero set $\Theta(z)=0$ from $0$.}
\end{figure}

\end{example}
Inspired by the above example we now exhibit a non-trivial class of functions on which the map \eqref{map} in injective.
\begin{definition}\label{defar}
For $r\geq0$, let $\mathcal{A}_r$ be the space of smooth functions $a$ in $\rnn$ that extend to entire functions $a(\zeta)$ in $\mathbb{C}^{2n}$ and satisfying the estimate
\[
|a(\zeta)|\leq C(1+|\zeta|)^N\exp\Big(\frac{r}{\hslash}|{\rm Im}\,\zeta|\Big),\quad \zeta\in \mathbb{C}^{2n},
\]
for some $C,N>0$. \par
Equivalently (by the Paley-Wiener-Schwartz Theorem) $\mathcal{A}_r$ is the space of temperate distributions in $\rnn$ whose (symplectic) Fourier transform is supported in the closed ball $|z|\leq r$. 
\end{definition}
\begin{remark}
Observe that the space $\mathcal{A}_0$ is just the space of polynomials in phase space.
\end{remark}  
We have the following result. 
\begin{proposition}\label{pro6}
The map \eqref{map} is a bijection $\mathcal{A}_r\to\mathcal{A}_r$ if and only if $
0\leq r<\sqrt{4\pi\hslash}.
$
\end{proposition}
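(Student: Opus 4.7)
The plan is to pass via the symplectic Fourier transform to an equivalent question about multiplication by $\Theta$ on compactly supported distributions, and then exploit the geometric fact highlighted in Example \ref{rem2}: the distance from the origin to the zero set $Z=\{z\in\rnn:\Theta(z)=0\}$ is exactly $\sqrt{4\pi\hslash}$. Using \eqref{eq1}, the convolution map \eqref{map} transports under $F_\sigma$ to the multiplication map $M_\Theta:u\mapsto \Theta u$. By Definition \ref{defar} and the Paley--Wiener--Schwartz theorem, $F_\sigma$ is a bijection from $\mathcal{A}_r$ onto the space $\mathcal{E}'_r$ of distributions with support in $\{|z|\leq r\}$; since multiplication by a smooth function cannot enlarge support, $M_\Theta$ sends $\mathcal{E}'_r$ into itself. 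Thus the proposition reduces to characterizing when $M_\Theta:\mathcal{E}'_r\to \mathcal{E}'_r$ is bijective.

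For the sufficiency direction, I assume $0\leq r<\sqrt{4\pi\hslash}$. I would choose an intermediate radius $r<\rho<\sqrt{4\pi\hslash}$ and a cut-off $\phi\in C^\infty_c(\rnn)$ equal to $1$ on $\{|z|\leq r\}$ with $\mathrm{supp}\,\phi\subset\{|z|\leq\rho\}$. Since $\{|z|\leq\rho\}$ is disjoint from $Z$, the function $\chi:=\phi/\Theta$ lies in $C^\infty_c(\rnn)$ and satisfies the pointwise identity $\Theta\chi=\phi$. For any $u\in\mathcal{E}'_r$ the relation $\phi u=u$ (which holds because $\phi\equiv 1$ near $\mathrm{supp}\,u$) then yields simultaneously $\chi(\Theta u)=u$ (giving injectivity of $M_\Theta$) and $\Theta(\chi u)=u$ (giving surjectivity, with $\chi u\in\mathcal{E}'_r$). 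This step is essentially routine; no serious obstacle is expected.

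For the necessity direction, I assume $r\geq \sqrt{4\pi\hslash}$ and exhibit an explicit element of $\ker M_\Theta$ using Example \ref{rem2}. Pick any $z_0\in Z$ with $|z_0|=\sqrt{4\pi\hslash}\leq r$ (for instance the point of Figure 4). The exponential $a(z)=e^{\frac{i}{\hslash}\sigma(z_0,z)}$ belongs to $\mathcal{A}_r$, because its symplectic Fourier transform $a_\sigma=(\tph)^n\delta_{z_0}$ is a non-zero distribution supported in $\{|z|\leq r\}$; yet $\Theta a_\sigma=0$ since $\Theta(z_0)=0$, so the image of $a$ under \eqref{map} vanishes, contradicting injectivity. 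The only point I would single out as a mild obstacle is the sharp computation that the minimum of $|z|$ over $Z$ equals $\sqrt{4\pi\hslash}$; this is precisely the constrained minimization on the hyperbola $xp=2\pi\hslash$ carried out at the end of Example \ref{rem2}, and it is what pins down the threshold.
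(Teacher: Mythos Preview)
Your proposal is correct and follows essentially the same approach as the paper: reduce via the symplectic Fourier transform to the multiplication map $M_\Theta$ on $\mathcal{E}'(B_r)$, use that $\Theta$ does not vanish in a neighborhood of $B_r$ when $r<\sqrt{4\pi\hslash}$ to invert it there, and invoke Example \ref{rem2} for the failure of injectivity when $r\geq\sqrt{4\pi\hslash}$. Your explicit cut-off $\chi=\phi/\Theta$ is just a slightly more detailed version of the paper's one-line ``multiply by $\Theta^{-1}$''.
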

\begin{proof}
The ``only if'' part follows at once from the Remark \ref{rem2} because the symbol $a(z)$ in \eqref{eq2} belongs to $\mathcal{A}_r$ for $r\geq\sqrt{4\pi\hslash}$ and is mapped to $0$. \par
Consider now the ``if'' part. Taking the symplectic Fourier transform in \eqref{map} and by the Paley-Wiener-Schwartz Theorem we are reduced to prove that, when $0\leq r<\sqrt{4\pi\hslash}$, the map
\[
a\longmapsto \Theta a 
\]
is a bijection $\mathcal{E}'(B_{r})\to  \mathcal{E}'(B_{r})$, where $\mathcal{E}'(B_{r})$ is the space of distributions on $\rnn$ supported in the closed ball $B_{r}$ given by $|z|\leq r$.\par
 Now, it is clear that if $a\in \mathcal{E}'(B_{r})$ then $\Theta a\in \mathcal{E}'(B_{r})$. On the other hand, since the function $\Theta(z)$ does not vanish for $|z|< \sqrt{4\pi\hslash}$,  hence in a neighborhood of $B_{r}$ (by assumption $r<\sqrt{4\pi\hslash}$), the equation $\Theta a=b$, for every $b\in \mathcal{E}'(B_{r})$, has a unique solution $a\in \mathcal{E}'(B_{r})$ obtained simply by multiplying by $\Theta^{-1}$: $a=\Theta^{-1} b$. 
\end{proof}
\begin{remark}
The above result recaptures and generalizes the fact that the map \eqref{map} is a bijection of the space of polynomials in phase space into itself (case $r=0$); see Section \ref{polynomial}. 
\end{remark}

We now study the surjectivity of the map \eqref{map} on the spaces $\mathcal{A}_r$ when $r\geq \sqrt{4\pi\hslash}$.
 
\begin{theorem}\label{mainteo2}
Let $ r\geq0$. For every $b\in \mathcal {A}_r$ there exists $a\in \mathcal {A}_{r}$ such that
\[
\left(\tfrac{1}{\tph}\right)^n a \ast \Theta_\sigma=b.
\]
\end{theorem}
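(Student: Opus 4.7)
The plan is to apply the symplectic Fourier transform and reduce the claim to a division problem with compact support control. By the Paley--Wiener--Schwartz theorem, the statement is equivalent to: for every $b\in\mathcal{E}'(B_r)$, the equation
\[
\Theta a = b
\]
admits a solution $a\in\mathcal{E}'(B_r)$. For $r<\sqrt{4\pi\hslash}$ this is already contained in Proposition \ref{pro6}, so I would assume $r\geq\sqrt{4\pi\hslash}$. The zero set $Z=\{\Theta=0\}$ is the disjoint union of the real-analytic hypersurfaces $Z_k=\{xp=2\pi k\hslash\}$, $k\in\mathbb{Z}\setminus\{0\}$; only finitely many of them meet $B_r$, and $\Theta$ has only simple zeros, so $\nabla\Theta\neq 0$ on $Z$.

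I would then cover a neighborhood of $B_r$ by a finite family of small open balls $\{U_j\}$ with a subordinate partition of unity $\{\chi_j\}$, and construct local solutions $a_j\in\mathcal{E}'(B_r)$ of $\Theta a_j=\chi_j b$. Three types of balls arise. \emph{(i)} If $U_j\cap Z=\emptyset$, then $\Theta$ is smooth and non-vanishing on $U_j$ and one simply sets $a_j:=(\chi_j/\Theta)b$. \emph{(ii)} If $U_j$ meets $Z$ only inside $B_r^{\circ}$, one may shrink $U_j$ so that it intersects a single $Z_k$, use a local real-analytic change of variables to straighten $Z_k$ into $\{y_{2n}=0\}$, factor $\Theta=y_{2n}g$ with $g$ smooth and non-vanishing, and solve $y_{2n}\tilde a_j=\chi_j b/g$ with support inside $U_j\Subset B_r^{\circ}$ by the construction underlying Proposition \ref{pro2}.

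The heart of the argument is case \emph{(iii)}, when $U_j$ meets $Z_k\cap\partial B_r$ for some $k$. Around a point $z_0$ in that intersection I would choose a local real-analytic diffeomorphism straightening $Z_k$ to $\{y_{2n}=0\}$; an orthogonal change of the $y'$-variables and, if necessary, a flip of the sign of $y_{2n}$ bring $\partial B_r$ into the form of the graph $\{y_{2n}=f(y')\}$ of a real-analytic function $f$, with $B_r$ corresponding to $\{y_{2n}\geq f(y')\}$. The set $\{f=0\}$ then coincides with $\partial B_r\cap Z_k$ in local coordinates. A further real-analytic change of the $y'$-variables places $\{f=0\}$ in the coordinate-subspace normal form $\{y_1=\cdots=y_\ell=0\}$ required by Proposition \ref{prolo}, and the \L ojasiewicz estimate \eqref{lo} holds automatically because $f$ is real-analytic. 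Factoring $\Theta=y_{2n}g$, dividing by $g$ and transporting $\chi_j b$ via the pushforwards of Section \ref{changevar}, the local problem takes the form $y_{2n}\tilde a_j=\tilde b_j$ with $\tilde b_j\in\mathcal{E}'(K)$, $K=\{y_{2n}\geq f(y')\}$; Proposition \ref{prolo} supplies $\tilde a_j\in\mathcal{E}'(K)$, and pulling back yields $a_j\in\mathcal{E}'(B_r)$. Setting $a:=\sum_j a_j$ then produces the desired compactly supported solution in $B_r$.

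The principal obstacle I foresee is the \emph{tangential} instances of case \emph{(iii)}, which occur precisely when $r=\sqrt{4\pi k\hslash}$ for some $k\geq 1$, at the points of $Z_k$ where $x=p$ (or $x=-p$ for $k<0$), because there the outward normals to $\partial B_r$ and $Z_k$ coincide; the tangent locus is an $(n-1)$-sphere. At such points $\nabla f$ vanishes along $\{f=0\}$, so $f$ vanishes to order two there, and putting $\{f=0\}$ in the coordinate-subspace form of Proposition \ref{prolo} requires the additional input that both $\partial B_r$ and $Z_k$ are real-analytic (which also supplies \L ojasiewicz). Away from these exceptional radii $\partial B_r$ and $Z_k$ meet everywhere transversely and $\{f=0\}$ is a smooth hypersurface, so the normalization in case \emph{(iii)} is elementary.
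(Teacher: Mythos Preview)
Your proposal is correct and follows essentially the same route as the paper: take the symplectic Fourier transform to reduce to a support-preserving division $\Theta a=b$ in $\mathcal{E}'(B_r)$, localize by a finite partition of unity, divide trivially away from $Z$, use the classical simple-zero division result at interior points, and at points of $Z_k\cap\partial B_r$ straighten $Z_k$ to $\{y_{2n}=0\}$, write $B_r$ locally as $\{y_{2n}\geq f(y')\}$, and invoke Proposition~\ref{prolo}, distinguishing the transversal case $r>\sqrt{4\pi|k|\hslash}$ from the tangential case $r=\sqrt{4\pi|k|\hslash}$ exactly as you do. The paper likewise relies on the real-analyticity of $f$ both to place $\{f=0\}$ in coordinate-subspace form and to guarantee the \L ojasiewicz inequality~\eqref{lo}.
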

\begin{proof}
As above we have to prove that the equation 
\[
 \Theta a=b
\]
admits at least a solution $a\in \mathcal{E}'(B_r)$, for every $b\in \mathcal{E}'(B_r)$.\par
Since all the distributions here are compactly supported, the problem is local and we can solve the equation $\Theta a=b$ in $\mathcal{E}'(U_{z_0})$ for a sufficiently small open neighborhood $U_{z_0}$ of any given point $z_0$ and conclude with a finite smooth partition of unity.\par
If $|z_0|>r$ and $U_{z_0}\subset \{|z|>r\}$ one can choose $a=0$ in $U_{z_0}$.\par\par
When $|z_0|<r$ and $U_{z_0}\subset \{|z|<r\}$ we apply the classical division theorem valid for smooth functions with at most simple zeros \cite[page 127]{schwartz}: for every $b\in \mathcal{E}'(U_{z_0})$ there therefore exists a solution $a\in \mathcal{E}'(U_{z_0})$.\par
Of course if $\Theta(z_0)\not=0$ the division is trivial, so that we now suppose that $z_0=(x_0,p_0)$ belongs to both $|z|=r$ and $xp= 2\pi k\hslash$ for some $k\in\mathbb{Z}$, $k\not=0$. Then necessarily we have $r\geq\sqrt{4\pi|k| \hslash}$, because this is the distance of the hypersurface $xp= 2\pi k\hslash$ from the origin. We therefore distinguish two cases. \par
{\it First case}:  $r>\sqrt{4\pi |k|\hslash}$. Then the hypersurfaces  $|z|=r$ and $xp= 2\pi k\hslash$ cut transversally at $z_0$, i.e.\ their normal vectors are linearly independent and the intersection $\Sigma$ is therefore a submanifold of codimension 2. In fact one sees easily that the vector normals to these two hypersurfaces at $z_0$ are linearly dependent if and only if $p_0={\rm sign}(k) x_0$ and $|x_0|^2=2\pi |k|\hslash$. In that case we must have $r=\sqrt{4\pi |k|\hslash}$.\par
{\it Second case}: $r=\sqrt{4\pi |k|\hslash}$. Then the hypersurfaces $|z|=r$ and $xp=2\pi k\hslash$ touch along the submanifold $\Sigma$ of codimension $n+1$ having equations $p={\rm sign}(k) x$, $|x|^2=2\pi |k|\hslash$. \par
In both cases by the implicit function theorem we can take analytic coordinates $y=(y',y_{2n})$ near $z_0$ so that $z_0$ has coordinates $y=0$, the hypersurface $xp= 2\pi k\hslash$ is straightened to $y_{2n}=0$ and moreover the above submanifold $\Sigma$ has equations $y_1=y_{2n}=0$ (in the first case) or $y_1=\ldots =y_{n}=y_{2n}=0$ (in the second case). The portion of ball $|z|\leq r$ near $z_0$ is defined now by the inequality $y_{2n}\geq f(y')$ for some real-analytic function $f(y')$ defined in a neighborhood of $0$, and vanishing on $\Sigma\ni0$.\par
 Hence we are reduced to solve the equation 
\[
y_{2n} a=b
\]
in a neighborhood of $0$, where $b$ is supported in the set  $y_{2n}\geq f(y')$ and we look for $a$ supported in the same set. This is exactly the situation of Proposition \ref{prolo} (possibly after a rescaling). As already observed, the condition \eqref{lo} is satisfied by every real-analytic function and therefore Proposition \ref{prolo} gives the desired conclusion.
\end{proof}
\begin{example}\label{esempio}\rm
We want to find a Born-Jordan symbol of the operator $\widehat{T}(z_0)$ in \eqref{HW}, $z_0=(x_0,p_0)\in\rnn$. First of all we observe that $\widehat{T}(z_0)$ has Weyl symbol
\[
b(z)=e^{\frac{i}{\hslash}\sigma(z_0,z)};
\]
see \cite[Proposition 198]{Birkbis}. 
Hence we are looking for $a\in\spnn$ such that 
$
\left(\tfrac{1}{\tph}\right)^n a\ast\Theta_\sigma=b,
$
or equivalently, taking the symplectic Fourier trasform, $\Theta a_\sigma=b_\sigma$, that is
\begin{equation}\label{e5}
\Theta(z) a_\sigma(z)=(2\pi \hslash)^n\delta(z-z_0).
\end{equation}
Now, if $\Theta(z_0)\not=0$ we can take $a_\sigma(z)= \Theta(z_0)^{-1}(2\pi \hslash)^n\delta(z-z_0)$, namely $a(z)=\Theta(z_0)^{-1}e^{\frac{i}{\hslash}\sigma(z_0,z)}$. \par
If instead $\Theta(z_0)=0$ we look for $a_\sigma$ in the form 
\begin{equation}\label{e6}
a_\sigma(z)=(2\pi \hslash)^n\sum_{j=1}^{2n}c_j\partial_j\delta(z-z_0)
\end{equation}
for unknown $c_j\in\mathbb{C}$, $j=1,\ldots, 2n$. Since $\Theta(z_0)=0$ we have
\[
\Theta(z) a_\sigma(z)= (2\pi \hslash)^n\Big(-\sum_{j=1}^{2n} c_j \partial_{j}\Theta(z_0)\Big)\delta(z-z_0),
\]
so that the equation \eqref{e5} reduces to
\[
-\sum_{j=1}^{2n} c_j \partial_{j}\Theta(z_0)=1
\]
which has infinitely many solutions, because $\nabla \Theta(z_0)\not=0$ if $\Theta(z_0)=0$ ($\Theta(z)$ has only simple zeros). For any solution $\mathbf{c}:=(c_1,\ldots, c_{2n})$, taking the inverse symplectic Fourier transform in \eqref{e6} (using the formulas $(a_\sigma)_\sigma=a$, $a_\sigma(z)=F a(Jz)$, $F(\partial_j a)=\frac{i}{\hslash}z_j Fa$), we find a Born-Jordan symbol
\[
a(z)= \frac{i}{\hslash} \sigma(z,\mathbf{c})e^{\frac{i}{\hslash}\sigma(z_0,z)}.
\]
Observe that $b\in \mathcal{A}_r$ with $r=|z_0|$ and $a\in \mathcal{A}_r$ as well.

\end{example}

\end{document}